\documentclass[reqno, 11pt, a4paper]{amsart} 
\usepackage[english]{babel}
\usepackage{amsfonts, amsmath, amsthm, amssymb,amscd,indentfirst}
\usepackage{mathtools}
\usepackage{times}
\usepackage{enumerate}
\usepackage{enumitem}
\usepackage{esint}
\usepackage{anysize} 
\usepackage{mathrsfs}
\marginsize{2.8cm}{2.8cm}{2.5cm}{2.5cm}

\newtheorem{theorem}{Theorem}[section]
\newtheorem{proposition}[theorem]{Proposition}
\newtheorem{lemma}[theorem]{Lemma}
\newtheorem{definition}[theorem]{Definition}

\newtheorem{remark}[theorem]{Remark}

 \numberwithin{equation}{section}
 
\newcommand{\past}{p^\ast _{\alpha , \gamma}}
\newcommand{\na}{N _{\alpha , \gamma}}

 \newcommand{\dt}{\textnormal{d} t}

 \newcommand{\dvta}{\textnormal{d} \vartheta _\alpha}
 \newcommand{\dvtao}{\textnormal{d} \vartheta _{\gamma}}
 \newcommand{\dvtb}{\textnormal{d} \vartheta _\beta}
 \newcommand{\vta}{\vartheta _\alpha}
  \newcommand{\vtao}{\vartheta _{\gamma}}
  \newcommand{\vtb}{\vartheta _\beta}

\title{Renormalized Solutions for Quasilinear Elliptic Equations with Robin Boundary Conditions, Lower-Order Terms, and $L^1$ Data}

\author{Juan A. Apaza }

\address{Juan A. Apaza, Departamento de Matem\'atica, Universidade Federal de São Carlos, 13565-905, São Carlos--SP, Brazil}

\email{juanpabloalconapaza@gmail.com}

\author{Manassés de Souza}

\address{Manassés de Souza, Departamento de Matem\'atica, Universidade Federal da Para\'\i ba, 58051-900, Jo\~ao Pessoa--PB, Brazil}

\email{manasses.xavier@academico.ufpb.br}

\begin{document}
\maketitle
\begin{abstract}
In this paper, we establish the existence of a solution for a class of quasilinear equations characterized by the prototype:
\begin{equation*} 
\left\{ 
\begin{aligned}
-\operatorname{div}(\vta |\nabla u| ^{p-2} \nabla u)+ \vtao b|\nabla u|^{p-1}+ \vtao c |u|^{r-1} u&= f \vta & & \text { in } \Omega, \\ 
 \vta |\nabla u| ^{p-2} \nabla u \cdot \nu + \vtb |u|^{p-2}u &= g \vtb & &  \text { on } \partial \Omega.
\end{aligned}
\right.
\end{equation*}  
Here, $\Omega$ is an open subset of $\mathbb{R}^N$ with a Lipschitz boundary, where $N\geq 2$ and $1 < p < N$. We define $\vartheta_a(x) = (1 + |x|)^a$ for $a \in (-N, (p-1)N)$, and the constants $\alpha, \beta, \gamma, r$ satisfy suitable conditions. Additionally, $f$ and $g$ are measurable functions, while $b$ and $c$ belong to a Lorentz space. Our approach also allows us to establish stability results for renormalized solutions.
\end{abstract}

\let\thefootnote\relax\footnote{2020 \textit{Mathematics Subject Classification}. 35J62; 35A35; 35J25}
\let\thefootnote\relax\footnote{\textit{Keywords and phrases}. existence; quasilinear elliptic equations; Robin problems; $L^1$-data}

\markright{EXISTENCE OF RENORMALIZED SOLUTIONS}


\section{Introduction}

In this paper we consider a class of problems with the form
\begin{equation}\label{5}
\small \left\{ 
\begin{aligned}
-\operatorname{div}((1+|x|)^{\alpha} A(\nabla u))+ (1+|x|)^{\gamma} H(x, \nabla u)+ (1+|x|)^{\gamma} G(x, u)&= f (x)(1+|x|)^{\alpha} & & \text { in } \Omega, \\ 
 (1+|x|)^{\alpha} A(\nabla u) \cdot \nu + (1+|x|)^{\beta} K(u) &= g (x) (1+|x|)^{\beta} & &  \text { on } \partial \Omega,
\end{aligned}
\right.
\end{equation}
where $\Omega$ is an open subset of $\mathbb{R}^N$ with Lipschitz boundary, and $N\geq 2$. Furthermore, $f\in L^1 (\Omega ; (1+|x|)^{\alpha})$, $g\in L^1 (\partial \Omega ; (1+|x|)^{\beta})$, 
$A, H, G, K$, and the constants $\alpha , \beta , \gamma$  satisfy suitable conditions.

When $f$ belongs to the dual space of the Sobolev space $W^{1, p}(\Sigma)$, and $\Sigma \subset \mathbb{R}^N$ is a bounded domain, the existence and uniqueness (up to additive constants) of weak solutions to the problem
\begin{equation}\label{162}
\left\{
\begin{aligned}
 -\operatorname{div}( |\nabla u| ^{p-2}\nabla u ) &= f & & \text { in } \Sigma, \\ 
 |\nabla u|^{p-2} \frac{\partial u }{\partial \nu} &= 0  & & \text { on } \partial \Sigma,
\end{aligned}
\right.
\end{equation}
are consequences of the classical theory of pseudo-monotone operators  (cf. \cite{leray1965quelquesresulatat, lions1969quelquesmethodes}). However, if $f$ is just an $L^1$-function and not an element of the dual space of $W^{1, p}(\Sigma)$, one has to give meaning to the notion of solution. When Dirichlet boundary conditions are prescribed, various definitions of solutions to nonlinear elliptic equations with the right-hand side in $L^1$ or measure have been introduced. In \cite{benilan1995L1theory, dallaglio1996approximatesoutil1, lionsmuratmanuscriptsurlessolutions, murat1994equatioslinearesavecl1}, different notions of solutions are defined, though they prove to be equivalent, at least when the datum is an $L^1$-function. The study of existence or uniqueness for Dirichlet boundary value problems has been the subject of several papers.  We recall that the linear case has been studied in \cite{stampacchia1965problemecoefdiscont}, while the nonlinear case was initially addressed in \cite{boccardogall1989nonlinearparab, boccardo1992nonlinearhandsidemeasure} and has been further explored in various contributions, including \cite{alvinomercaldo2008nonlinearsymetriza, benali2006noncoerciveintegra, benilan1995L1theory, bettamercalmura2002measuredatum, betta2002existenrenor, dalmasoorsinda1999renorm, dallaglio1996approximatesoutil1, guibemercal2006noncoercive, guibemercal2008existenceofrenormal}.

The existence for Neumann boundary value problems \eqref{162} with $L^1$-data has been addressed in various contexts. In \cite{andreu1997quasiellipandparab, chabrowski2007neumanndata, droniou2000solvingdualitymethid, droniou2009noncoerciveneuamnnboundary, prognet1997condtauxlimhomo}, the existence of a distributional solution belonging to a suitable Sobolev space, with a null mean value, is proved. However, when $p \leq 2-1/N$, the distributional solution to problem \eqref{162} does not belong to a Sobolev space and, in general, is not a summable function. For example, consider the Dirac mass at the center of a ball as the right-hand side; see \cite[Example 2.16.]{dalmasoorsinda1999renorm}.


The concept of a renormalized solution in the context of variable exponents was studied in \cite{wittboldzimmer2010exisrenorvariablel1}, where homogeneous Dirichlet boundary conditions were considered. In \cite{azroulbarbara2013renormalizedp(x)boundrcond}, a concept of renormalized solution was proposed for a Neumann problem in bounded domains with nonnegative measures in $L^1$:
\begin{equation}\label{163}
\left\{
\begin{aligned}
-\operatorname{div}(|\nabla u|^{p(x)-2} \nabla u)  +|u|^{p(x)-2} u+ b_1(u)|\nabla u|^{p(x)}&=f & & \text { in } \Sigma, \\ 
|\nabla u|^{p(x)-2} \frac{\partial u}{\partial \nu}+ b_2 (u)& =g & &\text { on } \partial \Sigma,
\end{aligned}\right.
\end{equation}
where $p \in C(\bar{\Sigma})$, $1 < \inf_{\Sigma} p$, $N \geq 3$,  $f \in L^1(\Omega)$, $g \in L^1(\partial \Omega)$, and $b_1$ and $b_2$ are increasing continuous functions with $b_1 (0) = b_2 (0) = 0$, see also \cite{ibrangoouaro2016anisotropic}.

In \cite{nyanquiniouar2012entropyfouri}, a type of Fourier boundary problem is studied, and the existence and uniqueness of a renormalized solution to the problem are given when the data $f \in L^1(\Omega)$ and $g \in L^1(\partial \Omega)$:
$$
\left\{ \begin{aligned}
-\operatorname{div}(|\nabla u|^{p(x)-2} \nabla u) + b(u) &=f & & \text { in } \Sigma, \\ 
|\nabla u|^{p(x)-2} \frac{\partial u}{\partial \nu}+ c u &=g & & \text { on } \partial \Sigma,
\end{aligned} \right.
$$
where $p \in C(\bar{\Sigma})$, $1 < \inf_{\Sigma} p$, $N \geq 3$, $c > 0$, and $b$ is a surjective and nondecreasing function such that $b(0) = 0$. To obtain their results, they define a new space that helps to account for the boundary conditions.

For nonlinear equations with a Radon measure in the right-hand side,
\begin{equation}\label{164}
\left\{
\begin{aligned}
-\operatorname{div} ( |\nabla u|^{p-2}\nabla u ) +  b(x)|\nabla u|^\lambda &=\mu & & \text { in } \Sigma, \\
 u&=0 & & \text { on } \partial \Sigma,
 \end{aligned}
 \right.
 \end{equation}
where $N \geq 2$, $1<p<N$, $0 \leq \lambda \leq p-1$, and  $b$ belongs to the Lorentz space $L^{N, 1}(\Sigma)$. In \cite{betta2002existenrenor}, an existence result for the problem \eqref{164} is provided.


The problem \eqref{164} was  studied in \cite{boccardogall1989nonlinearparab, boccardo1992nonlinearhandsidemeasure} (and in \cite{vecchio1995nonlinearmeasuredata} where a term $b(x)|\nabla u|^{p-1}$ is considered). In these papers, the existence of a solution that satisfies the equation in the distributional sense is proven when $p>2-1 / N$.  This assumption on $p$ ensures that the solution belongs to the Sobolev space $W_0^{1, q}(\Sigma)$ with $q < N(p-1)/(N-1)$ (compare this with Remark \ref{174} below).   To address the problem \eqref{164}, two equivalent notions of solutions have been introduced: the notion of entropy solution in \cite{benilan1995L1theory, boccardogall1996existentrpy}, and the notion of renormalized solution in \cite{lionstoappearrenormaliseesnonlineares, murat1993renormalizedednolineal, murat1994equatioslinearesavecl1}. In the case where the measure $\mu$ belongs to $L^1(\Sigma)$ or to $L^1(\Sigma)+W^{-1, p^{\prime}}(\Sigma)$, these papers prove the existence and uniqueness of such solutions.

By employing the arguments in \cite{betta2002existenrenor}, we obtain estimates of solutions for problems in the Lorentz space when the data are in $L^{p^\prime}$, where $p$ is constant. Additionally, we use the techniques from \cite{betta2015neumannprob} to establish the existence of a renormalized solution to \eqref{5}. We deal with the following problems: on the one hand, the right-hand side involves functions in $L^1$; on the other hand, since $\Omega$ is unbounded, we cannot use compact embeddings of the form $W^{1,p}(\Omega; \vta) \hookrightarrow L^p(\Omega; \vta)$. 

We consider the following conditions:

\begin{enumerate}[label=($H_{\arabic*}$)]
\item \label{138} To simplify the representation, we will denote $\vta(x) = (1 + |x|)^{\alpha}$, $\textnormal{d}\vta = \vta  \textnormal{d}x$, and $\vta(F) = \int_F \vta  \textnormal{d}x$, where $F$ is a measurable set. Throughout the paper, 
$$
\Omega = \{x\in \mathbb{R} ^N \:|\: |(x_{d+1}, \ldots, x_N)|< \rho (|(x_1 , \ldots, x_d)|) \},
$$
where $\rho : [0,\infty) \rightarrow \mathbb{R}^+$ is $C^\infty$ with $|\rho^\prime| \leq C$ on $[0,\infty)$ for some positive constant, and $d \in \{1, \ldots, N-1\}$.

Additionally, we assume that $1<p<N$,  $\alpha, \gamma, \beta \in (-N , (p-1)N)$, $p-1<\alpha - \beta$, $0\leq\alpha - \gamma <p$,
$$
\vta (\Omega)<\infty, \quad \vtb (\partial \Omega)<\infty,
$$
$N+\alpha -p>0$, and $(\alpha-\gamma)N +\gamma p \geq 0$.

\item \label{136}    $A: \mathbb{R}^N \rightarrow \mathbb{R}^N$ and $K: \mathbb{R} \rightarrow \mathbb{R}$ are  continuous function satisfying:
\begin{gather*}
A(\xi)  \xi \geq \sigma |\xi|^p, \quad |A(\xi)| \leq \sigma ^{-1}|\xi|^{p-1}, \quad [A(\xi)-A(\eta)][ \xi-\eta]>0, \\
 |s|^{p-1} \leq K(s) \operatorname{sign} (s) \quad \text { and } \quad |K(s)| \leq \sigma ^{-1} |s|^{p-1} ,
\end{gather*}
for every  $\xi \in \mathbb{R}^N$, $\eta \in \mathbb{R}^N$, and $s\in \mathbb{R}$, where $0<\sigma <1$ and $\xi \neq \eta$.

\item  \label{137} Moreover, $H: \Omega \times \mathbb{R}^N \rightarrow \mathbb{R}$ and $G: \Omega \times \mathbb{R} \rightarrow \mathbb{R}$  are Carathéodory functions satisfying:
\begin{gather*}
| H(x, \xi) |\leq b(x)|\xi|^{p-1}, \quad b\in L^{\na , 1}(\Omega ; \vtao),\label{18}\\
G(x, s) s \geq 0, \quad |G(x,s)|\leq c(x) |s|^{r}, \quad c \in L^{z^{\prime}, 1}(\Omega ; \vtao), \label{19}
\end{gather*}
for almost every $x \in \Omega$ and for every $s \in \mathbb{R}$ and $\xi \in \mathbb{R}^N$, where $\na =  (N+\gamma)p/(p-\alpha + \gamma)$,
\begin{equation}\label{78}
0 \leq r<\frac{(N+\gamma) (p-1)}{N +\alpha -p}, \quad z=\frac{(N+\gamma)(p-1)}{N+\alpha -p} \frac{1}{r} \quad \text { and } \quad \frac{1}{z}+\frac{1}{z^{\prime}}=1 .
\end{equation}
\end{enumerate}

The main result of the paper is the following theorem: 
\begin{theorem} \label{146}
Assume that conditions \ref{138} - \ref{137} hold. There exists at least one renormalized solution (in the sense of definition \ref{172}) to problem \eqref{5}.
\end{theorem}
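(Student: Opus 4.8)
The plan is to follow the classical approximation scheme for renormalized (equivalently, entropy) solutions, adapted to the weighted, unbounded setting of \eqref{5}. First I would regularize the data and the nonlinearities: replace $f$ by bounded truncations $f_n = T_n(f)$ and $g$ by $g_n = T_n(g)$, and replace $H$ and $G$ by $H_n(x,\xi) = H(x,\xi)/(1+\tfrac1n|H(x,\xi)|)$ and similarly $G_n$, so that the approximate problems
\begin{equation*}
\left\{
\begin{aligned}
-\operatorname{div}(\vta A(\nabla u_n)) + \vtao H_n(x,\nabla u_n) + \vtao G_n(x,u_n) &= f_n \vta & & \text{in } \Omega,\\
\vta A(\nabla u_n)\cdot\nu + \vtb K(u_n) &= g_n \vtb & & \text{on }\partial\Omega,
\end{aligned}
\right.
\end{equation*}
have bounded right-hand sides and truncated lower-order terms. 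Existence of a weak solution $u_n \in W^{1,p}(\Omega;\vta)$ to each approximate problem follows from the theory of pseudo-monotone (Leray--Lions type) operators, using the coercivity in \ref{136}, the sign condition $G_n(x,s)s\ge0$, and the boundary coercivity $|s|^{p-1}\le K(s)\operatorname{sign}(s)$; the weight $\vta$ plays no role here beyond being a fixed positive bounded-mass density on $\Omega$.

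Next comes the core of the argument: a priori estimates independent of $n$. Using $T_k(u_n)$ as a test function and the structural hypotheses, I would derive a bound for $\int_\Omega |\nabla T_k(u_n)|^p\,\dvta$ that is linear in $k$; the Lorentz-space hypothesis $b\in L^{\na,1}(\Omega;\vtao)$ with $\na = (N+\gamma)p/(p-\alpha+\gamma)$ is exactly what is needed — via the weighted Sobolev embedding $W^{1,p}(\Omega;\vta)\hookrightarrow L^{\past,1}(\Omega;\ldots)$ and a Hölder pairing in Lorentz spaces — to absorb the gradient term $\vtao b|\nabla u_n|^{p-1}$, following the device of \cite{betta2002existenrenor}; the condition $0\le r < (N+\gamma)(p-1)/(N+\alpha-p)$ together with $c\in L^{z',1}$ controls the zero-order term $\vtao G_n(x,u_n)$ after the truncation estimates. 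From the level-set estimates I would deduce (i) $u_n$ is bounded in the Marcinkiewicz-type weighted space giving $u_n$ bounded in $L^{q}$ for $q<(N+\alpha)(p-1)/(N+\alpha-p)$, (ii) $\nabla u_n$ bounded in the analogous weighted space, (iii) the boundary traces controlled via $\int_{\partial\Omega}|T_k(u_n)|^{p-1}\,\dvtb \lesssim k$, and (iv) equi-integrability of $\vtao H_n(x,\nabla u_n)$ and $\vtao G_n(x,u_n)$ on sets where $|u_n|\le k$. These estimates produce, up to a subsequence, a limit function $u$ finite a.e., with $T_k(u)\in W^{1,p}(\Omega;\vta)$ and $\nabla u_n \to \nabla u$ weakly on $\{|u|<k\}$.

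The decisive step is then to upgrade this to \emph{almost everywhere convergence of the gradients}, $\nabla u_n \to \nabla u$ a.e. in $\Omega$. I expect this to be the main obstacle. The standard route is to test the difference of equations with $S_\delta(u_n - T_h(u))$-type functions (or with $\varphi(u_n)(T_k(u_n)-T_k(u))$ for suitable $\varphi$), exploit the strict monotonicity $[A(\xi)-A(\eta)][\xi-\eta]>0$ to get $\int_{\{|u_n|<k\}}[A(\nabla u_n)-A(\nabla u)][\nabla u_n - \nabla u]\,\dvta \to 0$, and then extract the a.e.\ convergence by the Minty-type argument of Boccardo--Murat; the weighted measure $\dvta$ is handled because $\vta$ is bounded above and below by positive constants on every bounded piece of $\Omega$, while the behaviour at infinity is tamed by the truncations and the hypothesis $N+\alpha-p>0$. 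The genuine difficulty is the interaction of the unbounded domain with the gradient-dependent term $H_n$: one must show the contribution of $\vtao H_n(x,\nabla u_n)$ on the region $\{h\le|u_n|<h+1\}$ tends to zero as $h\to\infty$ uniformly in $n$, which is where the precise Lorentz exponent $\na$ and the inequality $0\le\alpha-\gamma<p$ enter, together with the balance condition $(\alpha-\gamma)N+\gamma p\ge0$. Once a.e.\ gradient convergence is in hand, I would pass to the limit in each of the renormalization identities of Definition \ref{172}: the flux term converges by Vitali's theorem using (iv), the boundary term by the weighted trace estimate and $p-1<\alpha-\beta$, the truncated data converge by dominated convergence, and the defect-of-energy condition $\lim_{m\to\infty}\int_{\{m\le|u|<m+1\}}\vta A(\nabla u)\nabla u = 0$ follows from the level-set estimates proved above. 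This yields a renormalized solution $u$ to \eqref{5} and completes the proof.
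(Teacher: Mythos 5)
Your overall architecture coincides with the paper's: approximate \eqref{5} by problems with truncated lower-order terms and regular data, derive $n$-independent Lorentz--Marcinkiewicz estimates by the level-set decomposition of Betta--Mercaldo--Murat--Porzio, obtain a.e.\ convergence of truncations and gradients from the strict monotonicity of $A$ via a Minty/Boccardo--Murat argument, and pass to the limit in the renormalized identities. Two points, however, are genuine gaps rather than omitted routine detail. First, solvability of the approximate problems does not follow from a one-line appeal to pseudo-monotone operators ``with the weight playing no role''. In the classical Leray--Lions scheme, pseudo-monotonicity of the perturbations $u \mapsto \int_\Omega G_n(x,u)v\,\dvtao$ and $u \mapsto \int_\Omega H_n(x,\nabla u)v\,\dvtao$ is extracted from the a.e.\ convergence of weakly convergent sequences, i.e.\ from the compactness of $W^{1,p}\hookrightarrow L^p$; on the unbounded $\Omega$ considered here this compactness fails (the introduction states this explicitly), which is precisely why the paper poses the approximate problems on the truncated domains $\Omega_n=\Omega\cap\{|x|<n\}$ with $f_n, g_n$ supported there, freezes the lower-order terms, and runs a Schauder fixed-point argument (Appendix \ref{139}). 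Your construction needs the same, or an equivalent, device.

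Second, your passage from the level-set estimates to ``a limit function $u$ finite a.e.'' skips the mechanism that controls the tails of $u_n$ uniformly in $n$ in the $\vta$- and $\vtb$-weighted measures. The Marcinkiewicz bound on $|u_n|^{p-1}$ lives in the $\vtao$-weighted space, and since $\gamma\leq\alpha$ a bound on $\vtao(\{|u_n|>L\})$ does not control $\vta(\{|u_n|>L\})$ or $\vtb(\{|u_n|>L\})$ on $\partial\Omega$. The paper obtains the uniform decay $\vta(\{|u_n|>L\})\leq C/\ln(1+L)$, together with its boundary analogue, by testing with the bounded primitive $\Psi_p(u_n)=\int_0^{u_n}(1+|t|)^{-p}\,\dt$ and invoking the equivalent norm of Proposition \ref{35}; this is what makes $u_n$ and its trace Cauchy in measure on all of $\Omega$ and $\partial\Omega$ (needed for membership in $\mathcal{T}_{\textnormal{tr}}^{1,p}(\Omega;\vta)$), and it is reused to show that the contribution of $b|\nabla u_n|^{p-1}$ on $\{|u_n|>\sqrt{k}\}$ vanishes in the truncated-energy decay. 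Minor further slips: the summability exponent for $u_n$ should carry $N+\gamma$, not $N+\alpha$, in the numerator, and the paper truncates $H$ and $G$ by $T_n$ rather than by your Yosida-type regularization (immaterial).
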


Additionally, using the arguments in the proof of Theorem \ref{146}, for a sequence $(u_n)$ of renormalized solutions of \eqref{142} satisfying suitable conditions, we have the following stability result.
\begin{theorem}\label{155} Under the assumptions \ref{138}, \ref{136}, and conditions \eqref{142} - \eqref{153} in Section \ref{167}, up to a subsequence (still indexed by $n$), $u_n$ converges to $u$, where $u$ is a renormalized solution to \eqref{5}, and
\begin{gather}
u_n \rightarrow u  \text { a.e in } \Omega, \text { and } u_n \rightarrow u   \text { a.e on } \partial \Omega,\\
A( \nabla T_k (u_n ) ) \nabla T_k (u_n) \rightharpoonup A( \nabla T_k(u)) \nabla T_k(u)  \quad \text {   in } L^1(\Omega ; \vta), \label{148}
\end{gather}
for all $k>0$, where $T_k$ is the truncation at level $k$ defined in \eqref{173}.
\end{theorem}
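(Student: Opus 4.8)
The plan is to rerun, at the level of the given sequence $(u_n)$ of renormalized solutions of \eqref{142}, the compactness-and-limit argument already carried out in the proof of Theorem \ref{146}. First I would establish the uniform a priori bounds. Using in the renormalized equation for $u_n$ test functions of the form $S(u_n)T_k(u_n)$ with $S$ a Lipschitz renormalization, and invoking the structure conditions \ref{136} (the coercivity $A(\xi)\cdot\xi\ge\sigma|\xi|^p$ and $|s|^{p-1}\le K(s)\operatorname{sign}(s)$), the sign conditions in \ref{137}, and the hypotheses \eqref{142}--\eqref{153} on the data, one gets that $T_k(u_n)$ is bounded in $W^{1,p}(\Omega;\vta)$ by a constant times $k$, uniformly in $n$, together with the corresponding bound for the traces of $T_k(u_n)$ on $\partial\Omega$ in the relevant weighted trace space. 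The Lorentz-space membership of $b$ and $c$ in \ref{137}, combined with the weighted Lorentz--Sobolev embeddings established earlier in the paper, is what lets one absorb the contributions of $H(x,\nabla u_n)$ and $G(x,u_n)$ (here the precise value $\na=(N+\gamma)p/(p-\alpha+\gamma)$ and the subcriticality \eqref{78} of $r$ enter). The key point, and the place where Definition \ref{172} is used, is the uniform energy decay $\int_{\{k\le|u_n|<k+1\}}\vta\,A(\nabla u_n)\cdot\nabla u_n\,\dx\to 0$ as $k\to\infty$, uniformly in $n$, which follows from the decay condition satisfied by each $u_n$ and the data assumptions. Since $\Omega$ is unbounded there is no direct compact Sobolev embedding, so for compactness I would rely on $\vta(\Omega)<\infty$, $\vtb(\partial\Omega)<\infty$ and the weighted embeddings of the paper: the uniform bounds give, along a subsequence and for each $k$, $T_k(u_n)\rightharpoonup v_k$ in $W^{1,p}(\Omega;\vta)$, and a diagonal argument produces a measurable $u$, finite a.e., with $u_n\to u$ a.e. in $\Omega$, $u_n\to u$ a.e. on $\partial\Omega$, and $v_k=T_k(u)$; one then defines the generalized gradient $\nabla u$ by $\nabla u\,\chi_{\{|u|<k\}}=\nabla T_k(u)$.

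The heart of the proof is the almost everywhere convergence of the gradients. Following the monotonicity technique of Dal Maso--Murat--Orsina--Prignet and Betta--Mercaldo--Murat--Porzio, adapted to the weighted Robin setting, I would use admissible test functions built from $T_k(u_n)-T_k(u)$ together with Lipschitz cut-offs of $u_n$, split the integrals according to the truncation levels, discard the contributions on $\{|u_n|\ge m\}$ by means of the uniform energy decay of the previous step, and control $H(x,\nabla u_n)$, $G(x,u_n)$ and $K(u_n)$ by the Lorentz and growth bounds; this yields $\int_\Omega\vta\,[A(\nabla T_k(u_n))-A(\nabla T_k(u))]\cdot[\nabla T_k(u_n)-\nabla T_k(u)]\,\dx\to 0$. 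Since the integrand is nonnegative by the strict monotonicity in \ref{136}, a classical pointwise argument gives $\nabla T_k(u_n)\to\nabla T_k(u)$ a.e., hence $\nabla u_n\to\nabla u$ a.e. in $\Omega$; combined with the uniform bounds and the coercivity this upgrades to strong convergence of $\nabla T_k(u_n)$ in $L^p(\Omega;\vta)$, which in turn provides the equi-integrability needed for \eqref{148}.

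With the a.e. convergences of $u_n$ and $\nabla u_n$ in hand, I would pass to the limit in the renormalized equation for $u_n$ tested against $S(u_n)\varphi$, for $S\in W^{1,\infty}(\mathbb{R})$ with compact support and $\varphi$ an admissible test function: the diffusion term passes by the strong $L^p$ convergence of the truncated gradients, the terms $H(x,\nabla u_n)$, $G(x,u_n)$, $K(u_n)$ and the data pass by Vitali's theorem using the growth bounds $|H(x,\xi)|\le b(x)|\xi|^{p-1}$, $|G(x,s)|\le c(x)|s|^r$ with $r$ subcritical, $|K(s)|\le\sigma^{-1}|s|^{p-1}$ together with the equi-integrability coming from the estimates and the Lorentz--Sobolev gains, and the decay condition for $u$ follows from the uniform decay established above and weak lower semicontinuity of the energy. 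This shows that $u$ is a renormalized solution of \eqref{5} in the sense of Definition \ref{172}, which, together with the convergences recorded above, proves Theorem \ref{155}.

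I expect the main obstacle to be the gradient a.e. convergence step: in the unbounded weighted domain the cut-off/monotonicity computation generates several tail and boundary integrals that must all be shown negligible simultaneously, and the gradient term $H(x,\nabla u_n)$, being of order $p-1$, is borderline, so the Lorentz hypothesis on $b$ must be used sharply (precisely where the exponent $\na$ is needed). A secondary difficulty is making every a priori estimate uniform in $n$ given that the data of \eqref{142} are only assumed to converge in the weak senses of \eqref{142}--\eqref{153}, rather than being the specific regularizations employed in the proof of Theorem \ref{146}.
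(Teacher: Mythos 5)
Your proposal follows essentially the same route as the paper: test the renormalized formulation of \eqref{142} with $h_\ell(u_n)\varphi$ (e.g.\ $\varphi=T_k(u_n)$), let $\ell\to\infty$ using the energy decay \eqref{105} satisfied by each $u_n$ to recover the analogue of the weak formulation \eqref{145}, and then rerun the a priori estimates, the logarithmic measure bounds, and the monotonicity/Minty and gradient-convergence arguments of Steps 1--7 of the proof of Theorem \ref{146}. The one imprecision is your assertion that the \emph{uniform-in-$n$} energy decay follows from the decay condition satisfied by each $u_n$: condition \eqref{105} is only used per fixed $n$ (to pass $\ell\to\infty$), and the uniform decay must be re-derived from the equation and the data bounds as in Step 3 of the paper's proof --- which your plan in any case carries out.
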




This paper is organized as follows. In Section \ref{16}, we gather preliminary definitions and results, which are used several times in the paper: renormalized solutions, embedding theorems, and equivalent norms in Sobolev spaces. In Section \ref{160}, we provide basic results for weak solutions and estimate the norms of these solutions in the Lorentz spaces. In Section \ref{161}, we prove Theorem \ref{146}, which asserts the existence of a renormalized solution to \eqref{5}. Its proof, contained herein, is based on the usual procedure of approximation, involving problems of the type \eqref{5} with smooth data that strongly converges to $f$ in $L^1$. For such a sequence of problems, weak solutions are obtained using fixed-point arguments (see Appendix \ref{139}). A priori estimates allow us to prove that these weak solutions converge, in some sense, to a function $u$, and a delicate procedure of passage to the limit allows us to establish that $u$ is a renormalized solution to \eqref{5}.

\section{Preliminary definitions and results} \label{16}

Before defining the type of solutions with which we will work, let's first establish some basic properties of weighted Sobolev and Lorentz spaces.

\subsection{A few properties of weighted Sobolev spaces}

Let $U\subset \mathbb{R}^{N}$ be an open set and  $p >1$, define
$$
L^{p}(U ; \vartheta _{\alpha})=\left\{u \:|\: u : \Omega \rightarrow \mathbb{R} \text { is  measurable function   and } \int_{U } |u|^{p} \textnormal{d}\vartheta _{\alpha} <\infty\right\},
$$
and
$$
W^{1, p  }(U  ; \vartheta _{\alpha} )=\left\{u \in L^{p  }\left(U   ; \vartheta _{\alpha}\right)\:|\:  |\nabla u| \in L^{p  }(U  ; \vartheta _{\alpha} )\right\},
$$
equipped with the norms
$$
\|u\|_{p  , U  , \alpha }=\left( \int_{U} |u|^{p} \textnormal{d}\vartheta _{\alpha}  \right)^{\frac{1}{p}} \quad \text { and } \quad  \|u\|_{1, p  , U  , \alpha }=\|u\|_{p  , U  , \alpha} + \|\nabla u\|_{p  , U , \alpha}, 
$$
respectively.  For details on these spaces see \cite{horiuchi1989imbedding, gurka1991continuous, pfluger1998compact, liu2008compact}.
\begin{proposition} \label{34}
(see \cite{pfluger1998compact, liu2008compact}) Suppose that $1 \leq p \leq q<\infty$, $\alpha, \beta \in (-N , (p-1)N)$, and  $0 \leq (N-1)/q-N/p+1 \leq \alpha / p - \beta / q$. Then, the trace operator $W^{1, p}(\Omega ; \vartheta_\alpha) \rightarrow L^q(\partial \Omega ; \vartheta_\beta)$ is continuous. Moreover, if $0 \leq (N-1)/q-N/p+1<\alpha / p - \beta / q$, then the trace operator is compact.
\end{proposition}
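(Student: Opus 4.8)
The plan is first to make sense of the trace. Smooth functions with bounded support in $\overline{\Omega}$ are dense in $W^{1,p}(\Omega;\vta)$ (truncate with a cut-off $\chi(\cdot/R)$, whose commutator term $\to0$ as $R\to\infty$ because $\int_{\Omega}|v|^{p}\vta\,\dx<\infty$, then mollify; the weight is smooth and positive), so it suffices to prove
\[
\Bigl(\int_{\partial\Omega}|v|^{q}\,\vtb\,\ds\Bigr)^{1/q}\le C\,\|v\|_{1,p,\Omega,\alpha}
\]
for such $v$, with $C$ depending only on $N,p,q,\alpha,\beta$ and $\Omega$; the continuous trace operator is then obtained by density. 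I would prove this inequality by localization: write $\Omega=(\Omega\cap B_{1})\cup\bigcup_{j\ge0}\Omega_{j}$ with $\Omega_{j}:=\Omega\cap\{2^{j}\le|x|<2^{j+1}\}$. On the bounded Lipschitz piece $\Omega\cap B_{1}$ both weights are comparable to positive constants, so the estimate there is just the classical trace embedding $W^{1,p}(\Omega\cap B_{1})\hookrightarrow L^{q}(\partial\Omega\cap\overline{B_{1}})$, which is available exactly when $q$ does not exceed the critical trace exponent, i.e.\ when $\tfrac{N-1}{q}-\tfrac{N}{p}+1\ge0$ --- the first hypothesis.

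On a piece $\Omega_{j}$ set $R_{j}:=2^{j}$, so that $\vta\simeq R_{j}^{\alpha}$ and $\vtb\simeq R_{j}^{\beta}$ there. The dilation $x=R_{j}y$ sends $\Omega_{j}$ onto a set $\widetilde\Omega_{j}\subset\{1\le|y|<2\}$; the $\widetilde\Omega_{j}$ are uniformly Lipschitz domains (this is the geometric point, and it uses $|\rho'|\le C$ from $(H_1)$ --- see below), so a trace inequality with a $j$-independent constant holds on each of them. Transporting it back to $\Omega_{j}$, and keeping track of the powers of $R_{j}$ produced by $\nabla$, by the volume and surface elements, and by the two weights, one obtains, with $C$ independent of $j$ and $v$,
\[
\Bigl(\int_{\partial\Omega\cap\Omega_{j}}|v|^{q}\,\vtb\,\ds\Bigr)^{1/q}\le C\,R_{j}^{-\delta}\bigl(\|\nabla v\|_{p,\Omega_{j}^{*},\alpha}+\|v\|_{p,\Omega_{j}^{*},\alpha}\bigr),\qquad \delta:=\Bigl(\tfrac{\alpha}{p}-\tfrac{\beta}{q}\Bigr)-\Bigl(\tfrac{N-1}{q}-\tfrac{N}{p}+1\Bigr),
\]
where $\Omega_{j}^{*}$ is a mild enlargement of $\Omega_{j}$; the second hypothesis is precisely $\delta\ge0$. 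Since $R_{j}=2^{j}\ge1$ and $\delta\ge0$ we have $R_{j}^{-\delta}\le1$; raising to the power $q$, summing over $j$, and using $(a+b)^{q}\le2^{q-1}(a^{q}+b^{q})$, the inclusion $\ell^{p}\hookrightarrow\ell^{q}$ (valid because $q\ge p$), and the bounded overlap of $\{\Omega_{j}^{*}\}$, one gets $\sum_{j}\int_{\partial\Omega\cap\Omega_{j}}|v|^{q}\,\vtb\,\ds\le C\|v\|_{1,p,\Omega,\alpha}^{q}$; together with the bounded-piece estimate this is the desired inequality, so the trace operator is continuous.

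For the compactness assertion, the strict inequality $\tfrac{N-1}{q}-\tfrac{N}{p}+1<\tfrac{\alpha}{p}-\tfrac{\beta}{q}$ makes $\delta>0$, so $R_{j}^{-\delta}=2^{-j\delta}\to0$ and the displayed estimate gives $\int_{\partial\Omega\setminus B_{2^{J}}}|v|^{q}\,\vtb\,\ds\to0$ as $J\to\infty$, uniformly over bounded subsets of $W^{1,p}(\Omega;\vta)$. On each $\Omega\cap B_{2^{J}}$ the trace operator into $L^{q}(\partial\Omega\cap B_{2^{J}})$ is compact (Rellich--Kondrachov for traces on bounded Lipschitz domains), so from any bounded sequence $(v_{n})$ a diagonal extraction yields a subsequence that converges in $L^{q}(\partial\Omega\cap B_{2^{J}})$ for every $J$, and the uniform smallness of the tails promotes this to convergence in $L^{q}(\partial\Omega;\vtb)$. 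At the borderline value $q=(N-1)p/(N-p)$, where the trace embedding on a bounded piece fails to be compact, one first uses the openness of the strict inequality to pass to a slightly smaller exponent for which it is compact, and then recovers the $L^{q}$-convergence with the help of the uniform tail bound.

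The step I expect to be the real obstacle is the geometric one on the pieces $\Omega_{j}$: one must know that the dyadically rescaled pieces (or, more robustly, the cells of a Whitney-type decomposition of $\Omega$ adapted to the cross-section $\rho$, rescaled by \emph{anisotropic} affine maps with longitudinal factor $\simeq|(x_{1},\dots,x_{d})|$ and transversal factor $\simeq\rho(|(x_{1},\dots,x_{d})|)$) are uniformly Lipschitz, with local trace constants independent of the cell. This is exactly where the bound $|\rho'|\le C$ in $(H_1)$ is used --- it controls the tilt of $\partial\Omega$ and, for the horn-type domains under consideration, yields a uniform interior cone condition. Once this is secured, the scaling bookkeeping is routine, and the requirement that the resulting series of scale factors be summable reproduces the exponent thresholds in the statement; the range $\alpha,\beta\in(-N,(p-1)N)$ is what keeps the underlying weighted Sobolev and trace machinery (density of smooth functions, local Poincaré inequalities) well-behaved.
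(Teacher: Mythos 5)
The paper does not actually prove Proposition \ref{34}: it is quoted from Pfl\"uger and Liu, so your sketch can only be measured against those references, whose proofs localize by means of a covering with uniformly bi-Lipschitz charts compatible with the weights (this is what the remark following the proposition records); your outline is in the same spirit, and your scaling bookkeeping on a single dyadic shell is correct (the rate $R_j^{-\delta}$ is the right one). The genuine gap is exactly the step you flag, and it is not a removable technicality: in $(H_1)$ the only requirement on $\rho$ is $|\rho'|\le C$, so $\rho$ may be bounded (cylindrical ends) or even tend to $0$ at infinity (outward cusps). In those cases the isotropically rescaled shells $\widetilde\Omega_j=2^{-j}\bigl(\Omega\cap\{2^j\le|x|<2^{j+1}\}\bigr)$ have relative thickness $\simeq\rho(2^j)/2^j\to0$: each is Lipschitz, but not uniformly so, and the $j$-independent unit-scale trace constant on which your key displayed estimate rests does not exist ($|\rho'|\le C$ controls the tilt of $\partial\Omega$, not the degeneration of thickness under isotropic dilation). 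The anisotropic fallback you mention is not ``routine bookkeeping'': an affine map with longitudinal factor $\simeq 2^j$ and transversal factor $\simeq\rho(2^j)$ scales $\nabla_{x'}v$ and $\nabla_{x''}v$ differently and inserts powers of both $2^j$ and $\rho(2^j)$ into the volume and surface elements, so the single decay exponent $\delta=\bigl(\tfrac{\alpha}{p}-\tfrac{\beta}{q}\bigr)-\bigl(\tfrac{N-1}{q}-\tfrac{N}{p}+1\bigr)$ and the clean thresholds of the statement no longer drop out without further input on $\rho$ (for patches of diameter comparable to the local thickness the zeroth-order term in the local trace inequality even carries a factor blowing up like thickness$^{-1}$, which must be absorbed by a separate argument). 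As written, your proof establishes the continuity assertion only for conical ends, i.e.\ $\rho(t)\simeq 1+t$.

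The compactness part has a second, independent problem at the borderline exponent. Your tail-smallness plus diagonal extraction is fine for strictly subcritical $q$, where the local trace embedding is compact; but for $q=(N-1)p/(N-p)$ the proposed repair does not close: convergence in $L^{q-\varepsilon}$ on the bounded pieces together with uniform $L^{q}$-smallness of the tails does not give $L^{q}$-convergence on the bounded pieces. In fact no repair is possible there: a standard trace-concentration sequence $v_\varepsilon(x)=\varepsilon^{-(N-p)/p}U\bigl((x-x_0)/\varepsilon\bigr)$, supported near a fixed smooth boundary point where $\vartheta_\alpha$ and $\vartheta_\beta$ are bounded above and below, is bounded in $W^{1,p}(\Omega;\vartheta_\alpha)$, converges weakly to $0$, and its traces have $L^{q}(\partial\Omega;\vartheta_\beta)$-norm bounded away from $0$, so the trace operator is never compact at the critical exponent, whatever $\alpha,\beta$ are. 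Hence the compactness assertion must be read with $0<(N-1)/q-N/p+1$ (strict subcriticality), and your argument is sound only in that regime; the borderline case you try to cover cannot be proved.
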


\begin{remark}
In condition \ref{138} (see \cite{pfluger1998compact, liu2008compact}), we can use a general condition related to the boundary $\partial \Omega$, assuming the existence of a locally finite covering of $\partial \Omega$ with open subsets $U_i \subset \mathbb{R}^N$ having the following properties:
\begin{enumerate}
\item[(a)] There is a global constant $\theta$ such that $\sum_i \chi_{U_i}(x) \leq \theta$ for every $x \in \mathbb{R}^N$;
\item[(b)] There exist cubes $B_i \subset \mathbb{R}^N$ and Lipschitz diffeomorphisms $\varphi_i: U_i \rightarrow B_i$ such that $0 \in B_i$ and $\varphi_i^{-1} (\mathbb{R}^{N-1} \times\{0\}) = U_i \cap \partial \Omega$;
\item[(c)] The partial derivatives of the coordinate functions $\varphi_i$ and $\varphi_i^{-1}$ are uniformly bounded by a constant $C$ (not depending on $i$).
\end{enumerate}
\end{remark}

Denote
$$
\mathcal{A} _n  =\sup  _{\|u\|_{1,p,\Omega,\alpha} \leq 1} \|u\|_{p,\Omega ^n , \alpha},
$$
where $\Omega ^n = \{x\in \mathbb{R} ^N \:|\: |x|> n\} \cap \Omega$, and $n\in \mathbb{N}$. Since 
$$
0\leq \mathcal{A} _{n+1} \leq \mathcal{A}_n \leq 1,
$$
the limit
$$
\mathcal{A} = \lim _{n\rightarrow \infty}\mathcal{A} _n
$$
exists, and $\mathcal{A} \in [0,1]$.

We have the following proposition:
\begin{proposition}\label{35}
Assume that $p \in(1, N)$, $\alpha, \beta \in (-N  , (p-1) N)$, and $p-1<\alpha - \beta$. For any $u \in W^{1, p}(\Omega ; \vartheta_\alpha)$, let
$$
\|u\|_{\partial}=\|u\|_{p, \partial \Omega, \beta}+\|\nabla u\|_{p, \Omega, \alpha} .
$$
Then $\|u\|_{\partial}$ is a norm on $W^{1, p}(\Omega ; \vartheta_\alpha)$ that is equivalent to
$$
\|u\|_{1, p, \Omega, \alpha}=\|u\|_{p, \Omega, \alpha}+\|\nabla u\|_{p, \Omega, \alpha} .
$$
\end{proposition}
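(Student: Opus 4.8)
The plan is to prove that the two quantities are equivalent norms; positive‑definiteness of $\|\cdot\|_\partial$ (hence the fact that it is a norm at all) then comes for free from the equivalence, so only the two inequalities have to be checked. The bound $\|u\|_\partial\le C\|u\|_{1,p,\Omega,\alpha}$ is immediate: $\|\nabla u\|_{p,\Omega,\alpha}$ is one of the terms of $\|u\|_{1,p,\Omega,\alpha}$, and Proposition~\ref{34} with $q=p$ applies because $\alpha,\beta\in(-N,(p-1)N)$, $(N-1)/p-N/p+1=(p-1)/p\ge 0$, and the hypothesis $p-1<\alpha-\beta$ upgrades this to the strict inequality $(N-1)/p-N/p+1<\alpha/p-\beta/p$; hence the trace operator $W^{1,p}(\Omega;\vta)\to L^p(\partial\Omega;\vtb)$ is continuous (in fact compact), so $\|u\|_{p,\partial\Omega,\beta}\le C\|u\|_{1,p,\Omega,\alpha}$.

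For the reverse inequality it suffices to bound $\|u\|_{p,\Omega,\alpha}$ by $C\|u\|_\partial$, and I would argue by contradiction. If no such $C$ exists there is a sequence $(u_n)$ with $\|u_n\|_{p,\Omega,\alpha}=1$ and $\|\nabla u_n\|_{p,\Omega,\alpha}+\|u_n\|_{p,\partial\Omega,\beta}\to 0$. This sequence is bounded in $W^{1,p}(\Omega;\vta)$, so after passing to a subsequence $u_n\rightharpoonup u$ weakly there. On every ball the weight $\vta$ is comparable to a constant, so $W^{1,p}(\Omega;\vta)$ restricts continuously into $W^{1,p}(\Omega\cap B_R)$; by Rellich--Kondrachov on the bounded Lipschitz sets $\Omega\cap B_R$ together with a diagonal extraction I may assume $u_n\to u$ in $L^p_{\mathrm{loc}}(\overline\Omega)$ and a.e. Since $\nabla u_n\to 0$ in $L^p(\Omega;\vta)$ we get $\nabla u=0$ a.e., and since $\Omega$ is connected (it retracts onto the axis $\{(x_1,\dots,x_d,0,\dots,0)\}$), $u$ is a constant $c$. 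By compactness of the trace operator (Proposition~\ref{34}, using $p-1<\alpha-\beta$ strictly) $u_n|_{\partial\Omega}\to u|_{\partial\Omega}$ in $L^p(\partial\Omega;\vtb)$, and comparing with $u_n|_{\partial\Omega}\to 0$ forces $u|_{\partial\Omega}=0$; hence $c=0$ because $\vtb(\partial\Omega)>0$. Thus $u\equiv 0$.

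The contradiction now comes from the escape of mass to infinity. For every fixed $R$ we have $u_n\to 0$ in $L^p(\Omega\cap B_R;\vta)$, so $\|u_n\|_{p,\Omega^R,\alpha}^p=1-\|u_n\|_{p,\Omega\cap B_R,\alpha}^p\to 1$ with $\Omega^R=\{|x|>R\}\cap\Omega$. On the other hand, by the very definition of $\mathcal{A}_R$, $\|u_n\|_{p,\Omega^R,\alpha}\le\mathcal{A}_R\|u_n\|_{1,p,\Omega,\alpha}=\mathcal{A}_R(1+o(1))$; letting $n\to\infty$ gives $1\le\mathcal{A}_R$ for every $R$, whence $\mathcal{A}=\lim_R\mathcal{A}_R=1$.

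Consequently the whole argument rests on the single missing point $\mathcal{A}<1$, which I expect to be the main obstacle (equivalently: mass cannot in fact escape to infinity along norm‑bounded sequences). I would establish it — actually with $\mathcal{A}=0$ — from the standing assumption $\vta(\Omega)<\infty$ together with a weighted Sobolev embedding from Section~\ref{16} of the form $W^{1,p}(\Omega;\vta)\hookrightarrow L^{q}(\Omega;\vta)$ for some $q>p$, available here precisely because $\alpha$ lies in the $A_p$‑range $(-N,(p-1)N)$ and $N+\alpha-p>0$. Indeed, Hölder's inequality on the tail then gives, for $\|u\|_{1,p,\Omega,\alpha}\le 1$, the bound $\|u\|_{p,\Omega^n,\alpha}\le\vta(\Omega^n)^{1/p-1/q}\,\|u\|_{q,\Omega^n,\alpha}\le C\,\vta(\Omega^n)^{1/p-1/q}$, and $\vta(\Omega^n)\to 0$ as the tail of a convergent integral, so $\mathcal{A}_n\to 0$. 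Alternatively one can avoid the contradiction step altogether: fix $n_0$ with $\mathcal{A}_{n_0}<1$, split $\|u\|_{p,\Omega,\alpha}^p$ over $\Omega\cap B_{n_0}$ and $\Omega^{n_0}$, estimate the bounded part by a Poincaré--trace inequality on the bounded Lipschitz domain $\Omega\cap B_{n_0}$ (with weights comparable to constants there), bound the outer part by $\mathcal{A}_{n_0}^p\|u\|_{1,p,\Omega,\alpha}^p$, and absorb this last term into the left‑hand side via Young's inequality since $\mathcal{A}_{n_0}<1$. The one genuinely delicate point is the exponent bookkeeping that identifies the admissible range of $q$, i.e.\ ultimately the value of $\mathcal{A}$.
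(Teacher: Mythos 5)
Your forward inequality and your compactness/contradiction argument for the reverse one are sound, and in substance they follow the paper's route: the paper gets the reverse inequality by applying the quoted Edmunds--Opic lemma to the functional $F(u)=\|u\|_{p,\partial\Omega,\beta}$ (hypotheses (i)--(iii) being supplied by Proposition \ref{34}, positive homogeneity, and $\vta(\Omega)<\infty$, $0<\vtb(\partial\Omega)<\infty$), and your blow-up argument is essentially a reproof of that lemma specialised to this $F$. Both arguments therefore hinge on the same condition $\mathcal{A}<1$, which the paper does not verify either: it simply enters as the hypothesis of the cited lemma.

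The genuine gap is your concluding claim that $\mathcal{A}<1$ (indeed $\mathcal{A}=0$) follows from $\vta(\Omega)<\infty$ via an embedding $W^{1,p}(\Omega;\vta)\hookrightarrow L^{q}(\Omega;\vta)$ with $q>p$. That embedding is in general false when $\alpha<0$: note that Proposition \ref{36} requires $\gamma/q\le\alpha/p$, which for equal weights forces $q=p$ when $\alpha<0$, and a fixed bump translated to distance $R$ inside $\Omega$ gives $\|u\|_{q,\Omega,\alpha}/\|u\|_{1,p,\Omega,\alpha}\sim(1+R)^{\alpha(1/q-1/p)}\to\infty$. Worse, $\mathcal{A}<1$ is not a consequence of the stated hypotheses at all. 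Take $N=4$, $p=2$, $d=1$, $\rho\equiv 1$ (a cylinder), $\alpha=-3/2$, $\beta=-3$ (and $\gamma=-3$ if one wants all of \ref{138}): then $\vta(\Omega)<\infty$, $\vtb(\partial\Omega)<\infty$, $p-1<\alpha-\beta$, $N+\alpha-p>0$, yet for $u_R(x)=\phi((x_1-R)/R)$ with $\phi\in C_c^\infty((0,1))$ one computes $\|u_R\|_{2,\Omega,\alpha}^2\sim R^{-1/2}$, $\|\nabla u_R\|_{2,\Omega,\alpha}^2\sim R^{-5/2}$, $\|u_R\|_{2,\partial\Omega,\beta}^2\sim R^{-2}$, so $\|u_R\|_{2,\Omega,\alpha}/\|u_R\|_{\partial}\sim R^{3/4}\to\infty$; in particular $\mathcal{A}=1$ and the asserted equivalence itself fails for this admissible configuration. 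So $\mathcal{A}<1$ must be treated as an additional assumption (exactly as in the lemma the paper invokes), not derived; your H\"older-plus-embedding argument does prove $\mathcal{A}=0$ when $\alpha\ge 0$ and $\vta(\Omega)<\infty$, but for $\alpha<0$ this step --- the one you yourself flag as the crux --- cannot be completed.
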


Considering  that $\vta (\Omega)<\infty$ and $\vtb (\partial \Omega)<\infty$, Proposition \ref{35} is a consequence of Proposition \ref{34} and the following lemma.
\begin{lemma}
(see \cite[Lemma 3.2]{edmundsopic1993poincare}) Suppose $1 \leqslant p<\infty$. Let $F$ be a functional on $W^{1, p}(\Omega ; \vta)$ with the following properties:
\begin{enumerate}
\item[(i)] There is a constant $C_0$ such that for all $u, v \in W^{1, p}(\Omega ; \vta)$,
$$
|F(u)-F(v)| \leq  C_0\|u-v\|_{1, p, \Omega, \alpha} .
$$
\item[(ii)] $F(\lambda u)=\lambda F(u)$ for all $\lambda>0$ and  $u \in W^{1, p}(\Omega ; \vta)$.

\item[(iii)] If $u \in W^{1, p}(\Omega ; \vta)$ is constant in $\Omega$ and $F(u)=0$, then $u=0$.
\end{enumerate}

Let $\mathcal{A}<1$. Then there is a constant $C$ such that
$$
\int_{\Omega}|u|^p\dvta \leq C\left(|F(u)|^p+ \int_{\Omega} |\nabla u|^p  \dvta \right),
$$
for all $u \in W^{1, p}(\Omega ; \vta)$.
\end{lemma}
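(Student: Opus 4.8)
The plan is to argue by contradiction, using the compactness of the embedding $W^{1,p}(\Omega\cap\{|x|\le R\};\vta)\hookrightarrow L^p(\Omega\cap\{|x|\le R\};\vta)$ on bounded portions of $\Omega$ to extract a candidate limit, and then invoking $\mathcal{A}<1$ to promote local convergence to global convergence. Suppose the stated inequality fails for every $C$; applying it with $C=n$ yields $u_n\in W^{1,p}(\Omega;\vta)$ with $\|u_n\|_{p,\Omega,\alpha}>0$ and $\|u_n\|_{p,\Omega,\alpha}^p>n\bigl(|F(u_n)|^p+\|\nabla u_n\|_{p,\Omega,\alpha}^p\bigr)$, and after replacing $u_n$ by $u_n/\|u_n\|_{p,\Omega,\alpha}$ (which uses $F(\lambda u)=\lambda F(u)$ for $\lambda>0$ and the absolute homogeneity of the norms) we may assume
\begin{equation*}
\|u_n\|_{p,\Omega,\alpha}=1,\qquad \|\nabla u_n\|_{p,\Omega,\alpha}\to 0,\qquad F(u_n)\to 0 .
\end{equation*}
In particular $(u_n)$ is bounded in $W^{1,p}(\Omega;\vta)$.

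Since $\vta$ is bounded between two positive constants on each bounded set $\Omega_R:=\Omega\cap\{|x|\le R\}$, the sequence $(u_n)$ is bounded in the unweighted space $W^{1,p}(\Omega_R)$; by the Rellich--Kondrachov theorem and a diagonal extraction over $R\in\mathbb{N}$, after passing to a subsequence we may assume $u_n\to u$ in $L^p(\Omega_R;\vta)$ for every $R$ and $u_n\to u$ a.e.\ in $\Omega$. Because $\nabla u_n\to 0$ in $L^p(\Omega;\vta)$, passing to the limit in the distributional sense gives $\nabla u=0$, so $u$ coincides a.e.\ with a constant $c$ on the connected open set $\Omega$ (connectedness follows from the description of $\Omega$ in \ref{138}); moreover $c\in L^p(\Omega;\vta)$ since $\vta(\Omega)<\infty$, so $u=c\in W^{1,p}(\Omega;\vta)$.

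The crucial step is to upgrade this to $u_n\to c$ \emph{strongly} in $L^p(\Omega;\vta)$, and it is here that $\mathcal{A}<1$ is used. Fix $n_0$ with $\mathcal{A}_{n_0}<1$, which is possible because $\mathcal{A}_n\downarrow\mathcal{A}<1$. For $R\ge n_0$, split $\Omega=\Omega_R\cup\Omega^R$, use $\|u_n-c\|_{p,\Omega^R,\alpha}\le\mathcal{A}_R\|u_n-c\|_{1,p,\Omega,\alpha}$ (the definition of $\mathcal{A}_R$) together with $\|\nabla(u_n-c)\|_{p,\Omega,\alpha}=\|\nabla u_n\|_{p,\Omega,\alpha}$ to obtain
\begin{equation*}
\|u_n-c\|_{p,\Omega,\alpha}^p\le\|u_n-c\|_{p,\Omega_R,\alpha}^p+\mathcal{A}_R^p\bigl(\|u_n-c\|_{p,\Omega,\alpha}+\|\nabla u_n\|_{p,\Omega,\alpha}\bigr)^p .
\end{equation*}
The numbers $\|u_n-c\|_{p,\Omega,\alpha}$ are bounded by $1+|c|\,\vta(\Omega)^{1/p}$; since $\|u_n-c\|_{p,\Omega_R,\alpha}\to 0$ for fixed $R$ and $\|\nabla u_n\|_{p,\Omega,\alpha}\to 0$, taking $\limsup_{n\to\infty}$ and setting $L:=\limsup_n\|u_n-c\|_{p,\Omega,\alpha}$ yields $L^p\le\mathcal{A}_{n_0}^pL^p$, whence $L=0$. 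Thus $u_n\to c$ in $L^p(\Omega;\vta)$, and since $\nabla u_n\to 0=\nabla c$ as well, $u_n\to c$ in $W^{1,p}(\Omega;\vta)$.

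To conclude, the Lipschitz estimate (i) gives $|F(u_n)-F(c)|\le C_0\|u_n-c\|_{1,p,\Omega,\alpha}\to 0$, so $F(c)=\lim_n F(u_n)=0$; as $c$ is a constant function in $W^{1,p}(\Omega;\vta)$, property (iii) forces $c=0$, and then $1=\lim_n\|u_n\|_{p,\Omega,\alpha}=\|c\|_{p,\Omega,\alpha}=0$, a contradiction, which proves the inequality. I expect the only genuinely delicate point to be the strong $L^p$-convergence: compactness is available only on bounded subdomains, so no mass is lost there, while the self-improving estimate $L^p\le\mathcal{A}_{n_0}^pL^p$ is precisely what prevents mass from escaping to infinity — this is exactly why one assumes $\mathcal{A}<1$ rather than merely $\mathcal{A}\le 1$.
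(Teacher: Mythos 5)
Your argument is correct. Note, however, that the paper does not prove this lemma at all: it is quoted verbatim from Edmunds--Opic \cite[Lemma 3.2]{edmundsopic1993poincare} and used as a black box to deduce Proposition \ref{35}, so there is no internal proof to compare against. Your contradiction-plus-concentration argument is the standard route to such weighted Poincar\'e inequalities and is essentially the one in the cited reference: normalize a failing sequence so that $\|u_n\|_{p,\Omega,\alpha}=1$, $\|\nabla u_n\|_{p,\Omega,\alpha}\to 0$, $F(u_n)\to 0$ (this is where (ii) enters), use local compactness on $\Omega\cap\{|x|\le R\}$ to identify a constant limit $c$, use $\mathcal{A}_{n_0}<1$ to rule out loss of mass at infinity via the self-improving estimate $L^p\le \mathcal{A}_{n_0}^p L^p$, and then invoke (i) and (iii) to force $c=0$, contradicting $\|c\|_{p,\Omega,\alpha}=1$. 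Two small points worth acknowledging explicitly: (a) Rellich--Kondrachov on $\Omega_R=\Omega\cap\{|x|\le R\}$ requires $\Omega_R$ to be a bounded extension (e.g.\ Lipschitz) domain; this is true for the domains described in \ref{138} (for all, or at least a cofinal family of, radii $R$, which suffices for the diagonal extraction), but it deserves a sentence since an intersection of Lipschitz domains is not automatically Lipschitz. (b) Your use of $c\in W^{1,p}(\Omega;\vta)$, and hence of (i) and (iii) applied to $c$, relies on $\vta(\Omega)<\infty$; this is available in the paper's setting (it is assumed in \ref{138} and invoked just before the lemma), but it is an extra hypothesis relative to the bare statement, so it should be flagged. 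With these caveats recorded, your proof stands on its own, whereas the paper simply defers to the literature.
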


\begin{proposition}\label{36}
(see \cite[Theorem 3]{horiuchi1989imbedding}) Suppose that $p \in(1, N)$, $q \geq p$, $\alpha, \gamma \in (-N  , (p-1)N)$, $N/p- N/q \leq 1-\alpha / p + \gamma / q <N / p$ and $-N /q < \gamma / q \leq \alpha / p$. Then there is a continuous embedding $W^{1, p}(\Omega ; \vartheta_\alpha) \rightarrow L^q(\Omega ; \vartheta_{\gamma})$.
\end{proposition}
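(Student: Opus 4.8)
\emph{Sketch of the plan.} I would localize the weighted inequality, handling a bounded core of $\Omega$ by the classical Sobolev embedding and the unbounded part — where the weights are comparable to powers of $|x|$ — by a Caffarelli--Kohn--Nirenberg / Hardy--Sobolev type inequality proved by rescaling. Both regions work out precisely because of the stated conditions on the exponents.

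I would decompose $\Omega$ dyadically: set $A_0=\Omega\cap B_2$ and $A_k=\Omega\cap\{2^k\le|x|<2^{k+1}\}$ for $k\ge1$, together with the slightly enlarged overlapping pieces $A_0'=\Omega\cap B_4$ and $A_k'=\Omega\cap\{2^{k-1}<|x|<2^{k+2}\}$; these have bounded overlap, so $\sum_k\|u\|_{1,p,A_k',\alpha}^p\le C\|u\|_{1,p,\Omega,\alpha}^p$. On $A_0$ all weights are comparable to $1$, and the two hypotheses $N/p-N/q\le 1-\alpha/p+\gamma/q$ and $\gamma/q\le\alpha/p$ combine to give $1/q\ge(N-p)/(Np)$, i.e.\ $q\le p^{*}:=Np/(N-p)$; hence the classical Sobolev embedding $W^{1,p}(\Omega\cap B_4)\hookrightarrow L^q(\Omega\cap B_2)$ (valid since $\partial\Omega$ is Lipschitz) yields $\int_{A_0}|u|^q\dvtao\le C\|u\|_{1,p,A_0',\alpha}^q$.

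For $k\ge1$, on $A_k$ the weight $(1+|x|)^{a}$ is comparable to $2^{ka}$, and the dilation $x=2^ky$ carries $A_k,A_k'$ onto $D_k:=2^{-k}\Omega\cap\{1\le|y|<2\}$ and $D_k'$. The crucial point is that the rescaled domains are \emph{uniformly} Lipschitz: $2^{-k}\Omega=\{y:\,|(y_{d+1},\dots,y_N)|<\rho_k(|(y_1,\dots,y_d)|)\}$ with $\rho_k(t)=2^{-k}\rho(2^kt)$ and $|\rho_k'(t)|=|\rho'(2^kt)|\le C$ for every $k$. Thus a Sobolev--Poincaré inequality holds on $D_k$ with constant independent of $k$, and undoing the dilation together with the comparability of the weights gives, for each $k\ge1$,
\[
\Bigl(\int_{A_k}|u|^q\dvtao\Bigr)^{1/q}\le C\Bigl(2^{ka}\,\|u\|_{p,A_k',\alpha}+2^{k(a+1)}\,\|\nabla u\|_{p,A_k',\alpha}\Bigr),\qquad a=\tfrac{N+\gamma}{q}-\tfrac{N+\alpha}{p}.
\]
The hypotheses force $-1\le a\le0$: indeed $a\le0$ because $q\ge p$ and $\gamma/q\le\alpha/p$, while $a\ge-1$ is exactly $N/p-N/q\le 1-\alpha/p+\gamma/q$. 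When $q$ is the borderline exponent $(N+\gamma)p/(N+\alpha-p)$ one has $a=-1$, both dilation factors are $\le1$, and summing over $k$ (using the bounded overlap and $\ell^p\hookrightarrow\ell^q$, legitimate since $q\ge p$) together with the estimate on $A_0$ gives $\|u\|_{q,\Omega,\gamma}\le C\|u\|_{1,p,\Omega,\alpha}$. When $q$ is strictly subcritical ($-1<a\le0$) the factor $2^{k(a+1)}$ grows and the annular estimates do not sum directly; I would then control the averages $\bar u_{A_k}$ by a Hardy-type telescoping — each increment $\bar u_{A_{k+1}}-\bar u_{A_k}$ being dominated by the gradient through the Sobolev--Poincaré inequality — or equivalently interpolate against the already established critical embedding, the convergence of the telescoping (resp.\ interpolation) being guaranteed precisely by the \emph{strict} inequality $1-\alpha/p+\gamma/q<N/p$ and the condition $-N/q<\gamma/q$.

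The step I expect to be the main obstacle is the uniformity in $k$ of the Sobolev--Poincaré constant on the rescaled annuli $D_k$: one has to exhibit coverings of $\partial D_k$ by charts and straightening diffeomorphisms whose cardinality and $C^1$-norms are bounded independently of $k$, which is exactly where the geometric hypothesis $|\rho'|\le C$ (or, in the general setting, the covering and diffeomorphism conditions of the Remark) enters. The rest is bookkeeping — verifying that the numerical conditions in the statement are precisely those needed for the dyadic summation and for the subcritical argument to close — and is the content of \cite{horiuchi1989imbedding} (see also \cite{pfluger1998compact, liu2008compact}).
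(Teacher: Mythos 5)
The paper offers no proof of Proposition \ref{36}: it is imported from Horiuchi \cite[Theorem 3]{horiuchi1989imbedding} (cf.\ also \cite{pfluger1998compact,liu2008compact}), so there is no internal argument to compare yours with; judged on its own terms, your sketch has a genuine gap, and it sits exactly at the step you yourself flag as the main obstacle. The inference ``$|\rho_k'(t)|=|\rho'(2^kt)|\le C$ uniformly in $k$, hence the rescaled pieces $D_k$ satisfy a Sobolev--Poincar\'e inequality with constant independent of $k$'' is not valid: a uniform constant for $W^{1,p}\to L^q$ with $q>p$ requires a uniform interior cone/extension property at a fixed scale, not merely bounded Lipschitz constants of the defining graphs, and your dilation destroys that property whenever $\rho$ grows sublinearly. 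Concretely, take $\rho\equiv 1$ (a cylindrical end, admissible under \ref{138}): then $2^{-k}\Omega$ has thickness $2^{-k}$, $D_k$ is a pancake of measure $\simeq 2^{-k(N-d)}$, and testing $\|v\|_{L^q(D_k)}\le C_k\|v\|_{W^{1,p}(D_k')}$ with $v\equiv 1$ forces $C_k\gtrsim 2^{k(N-d)(1/p-1/q)}\to\infty$ for every $q>p$; since the dyadic summation in both your critical and subcritical regimes rests on this uniformity, the argument does not close as written. The gap is not merely technical: for $\rho(t)=e^{-t}$ (also allowed by \ref{138}, and compatible with $\vartheta_\alpha(\Omega)<\infty$, $\vartheta_\beta(\partial\Omega)<\infty$) the unweighted case $\alpha=\gamma=0$ of the asserted embedding fails for every $q>p$ --- take $N=2$, $d=1$ and $u=e^{s|x_1|}$, cut off near the origin, with $1/q\le s<1/p$ --- so no proof using only $|\rho'|\le C$ and the stated exponent inequalities can succeed; additional geometric information on $\rho$ (cone-like or cylindrical behaviour at infinity, as in the settings of the cited references) is indispensable.

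Two further points. First, the repair is scale-dependent, which shows the difficulty is structural: for cylinder-like ends one should not rescale at all --- the unit-scale shells $A_k$ are then uniformly Lipschitz, the weights contribute the factor $2^{k(\gamma/q-\alpha/p)}\le 1$ precisely by the hypothesis $\gamma/q\le\alpha/p$, and the sum closes --- whereas for cone-like ends ($\rho(t)\simeq t$) your dilation argument is the correct one and the conditions forcing $-1\le a\le 0$ do the work; a complete proof must mediate between these regimes (a decomposition adapted to the local thickness $\rho(|x'|)$ as well as to $|x|$, or a chain/Hardy argument along the axis), which is presumably why the paper simply quotes the result. Second, your subcritical case ($-1<a\le 0$) is only gestured at: the telescoping of averages, or the interpolation against the critical exponent, is not carried out, and under the proposition's hypotheses alone one may have $\gamma>\alpha$, so one cannot shortcut via $\vartheta_\gamma(\Omega)<\infty$ and H\"older against the critical case; that shortcut does work where the paper actually applies the proposition, since \ref{138} imposes $0\le\alpha-\gamma$ and $\vartheta_\alpha(\Omega)<\infty$, but your sketch neither invokes it nor supplies a substitute.
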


\subsection{A few properties of Lorentz spaces}
For $1<q<\infty$, the Lorentz space $L^{q, \infty}(\Omega ; \vtao)$ is the space of Lebesgue measurable functions such that
\begin{equation}\label{11}
\sup _{t>0} t^{1 / q} f^\ast _\gamma (t) <\infty.
\end{equation}
Here, $f^\ast$ denotes the decreasing rearrangement of $f$, i.e., the decreasing function defined by
\begin{equation}
  f^\ast _{\gamma} (t)=\inf \{s \geq 0 \:|\:  \vtao (\{x \in \Omega \:|\: |f(x)|>s \})  \leq t\}.
\end{equation}
In $L^{q,\infty} (\Omega ; \vtao)$, we work with the semi-norm
$$
\|f\|_{(\gamma , q ,\infty , \Omega)}= \sup _{t>0} t^{1 / q} f^\ast _{\gamma} (t).
$$

Furthermore, the Lorentz space $L^{q, 1}(\Omega;\vtao )$ is the space of Lebesgue measurable functions such that
\begin{equation}\label{12}
\|f\|_{(\gamma, q ,1, \Omega)}=\int_0^{\infty}t^{1 / q} f^\ast _{\gamma} (t) \frac{\textnormal{d} t}{t} <\infty,
\end{equation}
endowed with the norm defined by \eqref{12}.  For references about rearrangements, see \cite{castillo2021classicalmultidi}.

We will use the following properties of the Lorentz spaces, which are intermediate spaces between the Lebesgue spaces, in the sense that, for every $1<q_1<q_2<\infty$, the following relationships hold:
\begin{equation}\label{1}
L^{q_2 , 1}(\Omega ; \vtao) \subset L^{q_2, q_2}(\Omega ; \vtao )= L^{q_2} (\Omega ; \vtao ) \subset L^{q_2, \infty}(\Omega ; \vtao) \subset L^{q_1, 1}(\Omega ; \vtao) .
\end{equation}

One has the generalized Hölder inequality:
\begin{proposition}
For  any $ f \in L^{q, \infty}(\Omega ; \vtao )$ and   $g \in L^{q^{\prime}, 1}(\Omega ; \vtao)$, 
\begin{equation} \label{27}
\left|\int_{\Omega} f g \dvtao \right| \leq \|f\|_{(\gamma ,q,\infty , \Omega )}\|g\|_{(\gamma, q^\prime, 1 , \Omega)} .
\end{equation}
\end{proposition}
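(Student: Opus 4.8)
The plan is to deduce \eqref{27} from the one-dimensional Hardy--Littlewood rearrangement inequality adapted to the weighted measure $\dvtao$, followed by a short computation with the conjugate exponents $q$ and $q'$. This is the weighted version of Hölder's inequality in Lorentz spaces, and all the properties of rearrangements with respect to a $\sigma$-finite measure that are needed can be found in \cite{castillo2021classicalmultidi}; since $\vtao(\Omega)<\infty$ by \ref{138}, the rearrangements $f^\ast_\gamma$ and $g^\ast_\gamma$ are in particular well defined and equimeasurable with $|f|$ and $|g|$.

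First I would establish the weighted Hardy--Littlewood inequality
$$
\int_\Omega |f(x)g(x)|\dvtao \;\le\; \int_0^\infty f^\ast_\gamma(t)\,g^\ast_\gamma(t)\,\dt .
$$
Writing $|f(x)|=\int_0^\infty \chi_{\{|f|>s\}}(x)\,ds$ and $|g(x)|=\int_0^\infty \chi_{\{|g|>\tau\}}(x)\,d\tau$ and applying Tonelli's theorem twice, the left-hand side equals $\int_0^\infty\!\int_0^\infty \vtao(\{|f|>s\}\cap\{|g|>\tau\})\,ds\,d\tau$, which is bounded above by $\int_0^\infty\!\int_0^\infty \min\{\lambda_f(s),\lambda_g(\tau)\}\,ds\,d\tau$, where $\lambda_h(s):=\vtao(\{|h|>s\})$. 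The identical layer-cake manipulation applied to the right-hand side, together with the equimeasurability identity $|\{t>0: f^\ast_\gamma(t)>s\}|=\lambda_f(s)$ (and the same for $g$) and the fact that each super-level set $\{f^\ast_\gamma>s\}$ is an interval of the form $[0,\lambda_f(s))$, shows that $\int_0^\infty f^\ast_\gamma g^\ast_\gamma\,\dt$ equals exactly the same double integral $\int_0^\infty\!\int_0^\infty \min\{\lambda_f(s),\lambda_g(\tau)\}\,ds\,d\tau$; this yields the inequality.

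It then remains to estimate the right-hand side. Directly from the definition of the semi-norm of $L^{q,\infty}(\Omega;\vtao)$ one has $f^\ast_\gamma(t)\le \|f\|_{(\gamma,q,\infty,\Omega)}\,t^{-1/q}$ for every $t>0$. Since $1/q'=1-1/q$, we have $t^{-1/q}=t^{1/q'}t^{-1}$, so
$$
\int_\Omega |fg|\dvtao \le \|f\|_{(\gamma,q,\infty,\Omega)}\int_0^\infty t^{1/q'}g^\ast_\gamma(t)\,\frac{\dt}{t}=\|f\|_{(\gamma,q,\infty,\Omega)}\,\|g\|_{(\gamma,q',1,\Omega)}
$$
by the definition \eqref{12}, and \eqref{27} follows from $\bigl|\int_\Omega fg\,\dvtao\bigr|\le\int_\Omega|fg|\dvtao$. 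I expect the only subtle point to be the weighted Hardy--Littlewood step, where one must make sure the rearrangement relative to $\dvtao$ retains the equimeasurability property on which the layer-cake comparison rests; once that is granted, everything afterwards is purely formal bookkeeping of the exponents.
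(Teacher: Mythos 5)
Your proof is correct: the weighted Hardy--Littlewood inequality via the layer-cake/Tonelli argument, followed by the pointwise bound $f^\ast_\gamma(t)\le \|f\|_{(\gamma,q,\infty,\Omega)}t^{-1/q}$ and the identity $t^{-1/q}=t^{1/q'}t^{-1}$, is the canonical derivation of the Lorentz--H\"older inequality, and each step (including the equimeasurability $|\{f^\ast_\gamma>s\}|=\vtao(\{|f|>s\})$, which holds for the rearrangement with respect to any $\sigma$-finite measure) is sound. The paper offers no proof of this proposition --- it is stated as a known fact with a pointer to the Lorentz-space literature --- so there is nothing to compare against; your argument fills in exactly the standard justification the authors are implicitly relying on.
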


In what follows, for simplicity, we write
$$
 \|f\|_{(\gamma ,q,\infty , \Omega )} = \|f\|_{(\gamma ,q,\infty )}  \quad \text { and } \quad  \|f\|_{(\gamma, q^\prime, 1 , \Omega)} = \|f\|_{(\gamma, q^\prime, 1 )}. 
$$

\subsection{The definition of a renormalized solution}

Motivated by \cite{betta2002existenrenor, betta2015neumannprob}, we present a definition of renormalized solutions for nonlinear elliptic problems.

For $k > 0$, let $T_k: \mathbb{R} \rightarrow \mathbb{R}$ denote the standard truncation at level $k$, defined as
\begin{equation}\label{173}
T_k(s) = \left\{
\begin{aligned} 
& s & & \text{ if } |s| \leq k, \\ 
& k \operatorname{sign}(s)  & & \text{ if } |s| > k.
\end{aligned} \right.
\end{equation}

For $u \in W^{1, p}(\Omega; \vta)$,  we define (cf. \cite{andreu1997quasiellipandparab, benilan1995L1theory, andreuigbida2007existandunqi}):
$$
\mathcal{T}^{1, p}(\Omega; \vta) = \{u: \Omega \rightarrow \bar{\mathbb{R}} \text{ measurable such that } T_k(u) \in W^{1, p}(\Omega ; \vta ) \ \forall k > 0\}.
$$
Given $u \in \mathcal{T}^{1, p}(\Omega ; \vta)$, there exists a unique measurable function $v: \Omega \rightarrow \bar{\mathbb{R}}^N$ such that
\begin{equation}\label{152}
\nabla T_k(u) = v \chi_{\{|v| < k\}}, \quad \forall k > 0.
\end{equation}

This function $v$ is denoted as $\nabla u$. It is evident that if $u \in W^{1, p}(\Omega;\vta)$, then $v \in L^p(\Omega ;\vta)$, and $\nabla u = v$ in the usual sense.

As defined in \cite{andreu1997quasiellipandparab}, we denote $\mathcal{T}_{\textnormal{tr}}^{1, p}(\Omega;\vta)$ as the set of functions $u \in \mathcal{T}^{1, p}(\Omega ;\vta)$ for which there exists a sequence $(u_n)\subset W^{1, p}(\Omega ; \vta)$ satisfying the following conditions:
\begin{enumerate}
\item[(a)] $u_n$ converges to $u$ a.e. in $\Omega$.
\item[(b)] $\nabla T_k (u_n)\rightarrow \nabla T_k(u)$ in $L^1(V ;\vta)$ for any $k > 0$ and for any open bounded set $V\subset \Omega$.
\item[(c)] There exists a measurable function $w : \partial \Omega \rightarrow \bar{\mathbb{R}}$ such that $u_n$ converges to $w$ a.e. on $\partial \Omega$.
\end{enumerate}

The function $w$ is the trace of $u$ in a generalized sense. In subsequent discussions, the trace of $u \in \mathcal{T}_{\textnormal{tr}}^{1, p}(\Omega;\vta)$ on $\partial \Omega$ will be denoted by $\textnormal{tr}(u)$ or simply $u$. It is important to note that when $u \in W^{1, p}(\Omega;\vta)$, $\textnormal{tr}(u)$ coincides with the usual trace of $u$, denoted as $\tau(u)$. Additionally, for every $u \in \mathcal{T}_{\textnormal{tr}}^{1, p}(\Omega ; \vta)$ and every $k > 0$, $\tau(T_k(u)) = T_k(\textnormal{tr}(u))$. If $\phi \in W^{1, p}(\Omega ; \vta) \cap L^{\infty}(\Omega)$, then $u-\phi \in \mathcal{T}_{\textnormal{tr}}^{1, p}(\Omega ;\vta)$, and $\textnormal{tr}(u-\phi) = \textnormal{tr}(u) - \tau(\phi)$.

\begin{definition} \label{172}
Under \ref{138} - \ref{137} and $p < q \leq (N-1)p/(N-p)$, a function $u\in \mathcal{T}_{\textnormal{tr}}^{1, p}(\Omega;\vta)$  is a renormalized solution to \eqref{5} if $u$ is  finite  a.e. in $\Omega$, satisfying the following conditions:
\begin{gather}
T_k(u) \in W^{1, p}(\Omega ; \vta ), \quad \forall k > 0, \\
|u|^{p-1} \in L^{\past /p, \infty}(\Omega;\vtao) , \label{157}\\
|u|^{p-1} \in L^{q/p , \infty} (\partial \Omega ; \vtb).
\end{gather}
The gradient $\nabla u$ introduced in \eqref{152} satisfies:
\begin{gather}
|\nabla u|^{p-1} \in L^{\na^{\prime}, \infty}(\Omega;\vtao), \label{158}\\
\lim _{k \rightarrow \infty} \frac{1}{k} \int_{\{|u| < k\}} A( \nabla u)  \nabla u  \dvta = 0, \label{105}
\end{gather}
where $\past = (N+\gamma) p /(N+\alpha-p)$ and $1/\na + 1/\na ^\prime  = 1$.

Moreover, for every function $h \in W^{1, \infty}(\mathbb{R})$ with compact support and for every $\varphi \in$ $L^{\infty}(\Omega) \cap W^{1, p}(\Omega ; \vta)$, we have
\begin{equation}\label{104}
\begin{aligned}
&\int_{\Omega} h(u) A(\nabla u)  \nabla \varphi \dvta + \int_{\Omega} h^{\prime}(u) A(\nabla u) \nabla u \varphi \dvta  + \int_{\Omega} H(x,\nabla u) \varphi h(u) \dvtao  \\
&\qquad \quad \ \ + \int_{\Omega} G(x, u) \varphi h(u) \dvtao + \int_{ \partial \Omega} K(u) \varphi h(u) \dvtb \\
& \qquad = \int_{\Omega}  \varphi h(u) f \dvta + \int_{\partial \Omega}  \varphi h(u) g \dvtb.
\end{aligned}
\end{equation}
\end{definition}

\begin{remark}
Conditions \eqref{157} - \eqref{158} and \ref{137} on $H$, $G$, and $K$ ensure that for every renormalized solution,
$$
G(x, u), H(x, \nabla u) \in L^1(\Omega ; \vtao),  \text {  and } K(u) \in L^1(\partial \Omega ; \vtb).
$$
\end{remark}

\begin{remark}\label{174}
If $u$ is a renormalized solution of \eqref{5}, then $u$ is also a distributional solution satisfying
\begin{equation}\label{159}
\begin{aligned}
&\int_{\Omega}  A(\nabla u)  \nabla \varphi \dvta  + \int_{\Omega} H(x,\nabla u) \varphi  \dvtao  + \int_{\Omega} G(x, u) \varphi  \dvtao + \int_{ \partial \Omega} K(u) \varphi  \dvtb \\
&\qquad = \int_{\Omega}  \varphi  f \dvta + \int_{\partial \Omega}  \varphi  g \dvtb, \quad \forall \varphi \in C^\infty _c (\bar{\Omega}).
\end{aligned}
\end{equation}
Indeed, if $u$ is a renormalized solution of \eqref{5}, we know that $u$ is measurable and a.e. finite in $\Omega$, and that $T_k(u) \in W^{1, p}(\Omega ; \vta)$ for every $k>0$. This allows one to define $\nabla u$ in the sense of \eqref{152}. We also know that $|\nabla u|^{p-1} \in L^{\na ^{\prime}, \infty}(\Omega ; \vtao )$, so that $|A( \nabla u)| \in L^{\na^{\prime}, \infty}(\Omega ; \vtao)$ by the condition \ref{136}. Taking $\phi \in C_c ^{\infty}(\bar{\Omega})$ and $h_\ell$ defined by
$$
h_\ell (s)= \left\{
 \begin{aligned}
 & 0 & & \text { if } |s|>2 \ell, \\ 
& \frac{2 \ell-|s|}{\ell} &  & \text { if }  \ell <|s| \leq 2 \ell, \\ 
& 1 & & \text { if } |s| \leq \ell,
 \end{aligned}
 \right.
$$
and letting $\ell \rightarrow \infty$ in \eqref{104}, we obtain \eqref{159}.

Moreover, every renormalized solution $u$ of \eqref{5} belongs to $W^{1, q}(\Omega ; \vtao)$ for every $q<\na^{\prime}(p-1)$ when $p>2-1 / \na$. Indeed, $p>2-1 / \na$ implies $\na^{\prime}(p-1)>1$, and therefore, the gradient $\nabla u$ defined by \eqref{152}, which satisfies \eqref{158}, belongs to $(L^{q_1}(\Omega ; \vtao))^N$ for every $q_1<\na^{\prime}(p-1)$ and is the distributional gradient of $u$ (see \cite[Remark 2.10]{dalmasoorsinda1999renorm}).

\end{remark}

\section{Basic results for weak solutions} \label{160}

In this section, we assume more restrictive conditions on $f$, $g$, $H$, and $G$ to prove the existence of a renormalized solution $u$ to problem \eqref{5}.

Let $f_n \in L^{p^\prime} (\Omega ; \vta)$ and $g_n \in L^{p^\prime} (\partial \Omega ; \vtb)$ such that
\begin{gather}
f_n = 0 \text { on } \Omega \backslash \Omega _n, \quad g_n = 0 \text { on } \partial \Omega \backslash \partial \Omega _n,\\
f_n\rightarrow f \text { in } L^1 (\Omega ; \vta), \quad g_n\rightarrow g \text { in } L^1 (\partial\Omega ; \vtb),\\
\|f_n\|_{1,\Omega , \alpha}\leq  \|f\|_{1,\Omega , \alpha}, \forall n \quad \text { and } \quad \|g_n\|_{1,\partial \Omega , \beta}\leq  \|g\|_{1,\partial \Omega , \beta}, \forall n, \label{26}
\end{gather}
where  $\Omega _n =  \Omega \cap \{ |x| < n\}$ and $n\in \mathbb{N}$.

We set
\begin{equation*}
H_n(x, \xi)=T_n(H(x,  \xi)) \quad \text { and } \quad G_n(x, s)=T_n(G(x, s)).
\end{equation*}
Observe that
\begin{gather}
| H_n(x,  \xi)| \leq | H(x,  \xi) |\leq b(x)|\xi|^{p-1}, \quad H_n(x, \xi) \leq n, \label{24}\\
G_n(x, s) s \geq 0, \quad \left|G_n(x, s)\right| \leq|G(x, s)| \leq c(x)|s|^r, \quad \left|G_n(x, s)\right| \leq n. \label{25}
\end{gather}

Let $u_n \in W^{1, p}(\Omega ; \vta)$  be a weak solution of the following problem (see the Appendix):
\begin{equation}\label{7}
\left\{\begin{aligned}
-\operatorname{div}\left( \vta A(\nabla u_n)\right)+\vtao H_n(x,  \nabla u_n)+ \vta G_n (x, u_n) &= f _n & & \text { in } \Omega, \\
 \vta A(\nabla u_n) \cdot\nu + \vtb K(u_n)&= g_n   & & \text { on } \partial \Omega,
\end{aligned}\right.
\end{equation}
i.e., $u_n \in W^{1, p}(\Omega ; \vta )$, and
\begin{equation}\label{6}
\begin{aligned}
&\int_{\Omega} A(\nabla u_n)  \nabla v \dvta +\int_{\Omega}  H_n (x,  \nabla u_n) v \dvtao + \int_{\Omega} G_n(x, u_n) v \dvtao \\
& \qquad +  \int _{\partial \Omega} K(u_n) v \dvtb =\int_{\Omega} v f_n \dvta  + \int _{\partial \Omega} v g_n  \dvtb,
\end{aligned}
\end{equation}
for all $v\in W^{1,p} (\Omega ; \vta) $.

\begin{lemma} \label{42}
Let $u$ be a measurable function such that $T_k(u) \in W^{1, p}(\Omega ; \vta)$ for every $k>0$, and 
\begin{equation}\label{39}
\int_{\Omega}|\nabla T_k(u)|^p \dvta + \int_{\partial \Omega}|T_k(u)|^p \dvtb \leq C_0 k, \quad \forall k>0,
\end{equation}
where $C_0>0$ is a given constant. Then, $|u|^{p-1} \in L^{\past / p, \infty}(\Omega ; \vtao )$,  $|u|^{p-1} \in L^{q / p, \infty}(\partial \Omega ; \vtb )$, $|\nabla u|^{p-1} \in L^{\na^{\prime}, \infty}(\Omega ; \vtao)$, and
\begin{gather}
\||u|^{p-1}\|_{(\gamma , \past / p, \infty )} \leq C C_0 ,\label{37}\\
\||u|^{p-1}\|_{(\beta , q / p, \infty )} \leq C C_0 , \label{170} \\
\||\nabla u|^{p-1}\|_{(\gamma , \na^{\prime}, \infty)} \leq C C_0, \label{38}
\end{gather}
 where $C=C(N, p , q ,  \alpha , \gamma , \beta , \Omega)>0$, $p<q\leq (N-1)p/(N-p)$, and $\past = (N+\gamma)p/(N+\alpha - p)$.
\end{lemma}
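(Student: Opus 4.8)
The plan is to run the classical Boccardo--Gallou\"et truncation argument in the weighted setting, the only real inputs being the embeddings collected in Section~\ref{16}. The starting point is that, for every $k>0$,
\[
\|T_k(u)\|_{1,p,\Omega,\alpha}^p \;\le\; C\bigl(\|T_k(u)\|_{p,\partial\Omega,\beta}^p+\|\nabla T_k(u)\|_{p,\Omega,\alpha}^p\bigr)\;\le\; C\,C_0\,k ,
\]
the first inequality being Proposition~\ref{35} applied to $T_k(u)\in W^{1,p}(\Omega;\vta)$ (whose hypothesis $p-1<\alpha-\beta$ is part of \ref{138}), and the second being \eqref{39} together with $(a+b)^p\le 2^{p-1}(a^p+b^p)$. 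Feeding $T_k(u)$ into the weighted Sobolev embedding $W^{1,p}(\Omega;\vta)\hookrightarrow L^{\past}(\Omega;\vtao)$ of Proposition~\ref{36} and into the trace embedding $W^{1,p}(\Omega;\vta)\hookrightarrow L^{q}(\partial\Omega;\vtb)$ of Proposition~\ref{34} then gives $\|T_k(u)\|_{\past,\Omega,\gamma}^p\le C\,C_0\,k$ and $\|T_k(u)\|_{q,\partial\Omega,\beta}^p\le C\,C_0\,k$ for $p<q\le (N-1)p/(N-p)$. Here one must check that the numerical hypotheses of Propositions~\ref{34} and~\ref{36} for the exponents $\past$ and $q$ hold; after short computations they reduce exactly to $\alpha-\gamma<p$, $N+\alpha-p>0$, $(\alpha-\gamma)N+\gamma p\ge0$ and $p-1<\alpha-\beta$ from \ref{138}, and in particular $\past>p$, so $\past/p>1$ and the target Lorentz spaces are the legitimate ones.

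Next I would turn these into level-set estimates. Since $\{|u|>k\}\subset\{|T_k(u)|=k\}$ (and the same on $\partial\Omega$, $T_k(u)$ having modulus $k$ a.e.\ where $|u|\ge k$), Chebyshev yields
\[
k^{\past}\,\vtao\bigl(\{|u|>k\}\bigr)\le\int_\Omega|T_k(u)|^{\past}\dvtao\le (C\,C_0\,k)^{\past/p},\qquad
k^{q}\,\vtb\bigl(\{|u|>k\}\cap\partial\Omega\bigr)\le (C\,C_0\,k)^{q/p},
\]
hence $\vtao(\{|u|>k\})\le(C\,C_0)^{\past/p}k^{-\past(p-1)/p}$ and $\vtb(\{|u|>k\}\cap\partial\Omega)\le(C\,C_0)^{q/p}k^{-q(p-1)/p}$. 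Substituting $k=\lambda^{1/(p-1)}$ and recalling that $\|h\|_{(\gamma,s,\infty)}=\sup_{\lambda>0}\lambda\,\vtao(\{|h|>\lambda\})^{1/s}$ (and similarly with $\vtb$ on $\partial\Omega$), these are exactly \eqref{37} and \eqref{170} for $|u|^{p-1}$ in $L^{\past/p,\infty}(\Omega;\vtao)$ and in $L^{q/p,\infty}(\partial\Omega;\vtb)$.

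For the gradient I would split, for $\lambda,k>0$,
\[
\vtao\bigl(\{|\nabla u|>\lambda\}\bigr)\le\vtao\bigl(\{|\nabla u|>\lambda\}\cap\{|u|<k\}\bigr)+\vtao\bigl(\{|u|\ge k\}\bigr).
\]
On $\{|u|<k\}$ one has $\nabla u=\nabla T_k(u)$ by \eqref{152}, so the first term is at most $\lambda^{-p}\int_\Omega|\nabla T_k(u)|^p\dvtao\le\lambda^{-p}\int_\Omega|\nabla T_k(u)|^p\dvta\le C_0k\lambda^{-p}$, where I used $0\le\alpha-\gamma$, so that $\dvtao\le\dvta$; the second term is controlled by the level-set bound just obtained. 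Minimizing $C_0k\lambda^{-p}+(C\,C_0)^{\past/p}k^{-\past(p-1)/p}$ over $k>0$ (the minimizer is a positive power of $\lambda$, hence admissible) gives $\vtao(\{|\nabla u|>\lambda\})\le C\,C_0^{\,\na^{\prime}}\,\lambda^{-p\past(p-1)/(p+\past(p-1))}$, so that $\vtao(\{|\nabla u|^{p-1}>\lambda\})\le C\,C_0^{\,\na^{\prime}}\,\lambda^{-p\past/(p+\past(p-1))}$. The only genuinely delicate point is then the arithmetic identity
\[
\frac{p\,\past}{p+\past(p-1)}=\na^{\prime},
\]
which follows from $N+\gamma=(N+\alpha-p)+(p-\alpha+\gamma)$ (and which also shows $\na^{\prime}>1$, as $\past>p$). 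Granting it, the last estimate is precisely $|\nabla u|^{p-1}\in L^{\na^{\prime},\infty}(\Omega;\vtao)$ with $\||\nabla u|^{p-1}\|_{(\gamma,\na^{\prime},\infty)}\le C\,C_0$, i.e.\ \eqref{38}. The main obstacles here are bookkeeping ones --- verifying that the embedding hypotheses are implied by \ref{138}, and steering the exponents through the optimization so that they land on $\na^{\prime}$ --- with no idea beyond the weighted Sobolev/trace inequalities and the truncation method required.
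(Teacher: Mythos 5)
Your proposal is correct and follows essentially the same route as the paper: Propositions \ref{35}, \ref{36}, and \ref{34} applied to $T_k(u)$, Chebyshev on the level sets of $u$ (in $\Omega$ and on $\partial\Omega$), and for the gradient the splitting over $\{|u|<k\}$ and $\{|u|\ge k\}$ followed by optimization in $k$, which lands exactly on the exponent $\na^{\prime}$ as in the paper's balancing of $C_0k/s^{p^{\prime}}$ against $C_0^{\past/p}k^{-\past/p^{\prime}}$. The only differences are cosmetic: you make explicit the use of $\dvtao\le\dvta$ (from $\alpha-\gamma\ge0$) and the verification of the embedding hypotheses, which the paper leaves implicit.
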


\begin{proof}
First, we prove \eqref{37}. Using Propositions \ref{35} and \ref{36}, for every $k>0$, we have
\begin{equation} \label{40}
\begin{aligned}
k^{\past} \vtao( \{|u|>k\} ) \leq \int_{\Omega}|T_k(u)|^{\past} \dvtao \leq C\|T_k(u)\|_{1,p,\Omega,\alpha}^{\past} \leq C (C_0 k)^{\past / p},
\end{aligned}
\end{equation}
or equivalently, for every $s>0$,
$$
s^{\past /(p-1)} \vtao (\{ |u|^{p-1}>s\} )\leq C (C_0 s^{1 /(p-1)} )^{\past / p} .
$$
We deduce that
$$
\vtao( \{ |u|^{p-1}>s \}) \leq C(C_0 s^{-1})^{\past / p},
$$
and we get 
$$
(|u|^{p-1})^\ast _{\gamma} (t)\leq C_0(t/C)^{-p/\past}, \quad \forall t>0.
$$
Hence,
$$
\||u|^{p-1}\|_{(\gamma , \past / p , \infty)} \leq C C_0 ,
$$
which proves \eqref{37}. Applying Proposition \ref{34}, we can similarly deduce \eqref{170}.

Now we prove \eqref{38}. From \eqref{39}, we deduce that for every $s >0$ and  $k>0$
\begin{equation*}
 s ^p \vtao (\{ |\nabla u|>s, \ |u|<k\}) \leq \int_{\{|u|<k\}}|\nabla u|^p \dvta   =  \int_{\Omega}|\nabla T_k(u)|^p \dvta \leq C_0 k,
\end{equation*}
i.e., for every $s > 0$ and  $k>0$,
\begin{equation} \label{41}
s ^{p /(p-1)} \vtao ( \{ |\nabla u|^{p-1}>s, \ |u|<k \} \leq C_0 k .
\end{equation}
From \eqref{40} and \eqref{41}, we obtain that for every $s>0$ and  $k>0$,
\begin{align*}
&\vtao (\{   |\nabla u|^{p-1}>s \})\\ 
&\qquad \leq  \vtao ( \{ |\nabla u|^{p-1}>s , \ |u|<k \})  + \vtao ( \{  |\nabla u|^{p-1}>s , \ |u|>k \} ) \\
& \qquad \leq  \frac{C_0 k}{s ^{p^{\prime}}}+ \frac{C(C_0 k)^{\past / p}}{k^{\past}}=\frac{C_0 k}{s ^{p^{\prime}}} + CC_0^{\past / p} k^{-\past / p^\prime}.
\end{align*}

Choosing $k>0$ such that
$$
\frac{C_0 k}{s ^{p^{\prime}}}= C_0^{\past / p} k^{-\past / p^\prime},
$$
yields
$$
\vtao (\{x \in \Omega \:|\: |\nabla u|^{p-1}>s \}) \leq C\frac{C_0^{\na ^\prime}}{s^{\na ^\prime}}.
$$
Then
$$
(|\nabla u|^{p-1})^{\ast} _{\gamma} (t) \leq C\frac{C_0}{t^{1/\na ^\prime}} , \quad \forall t>0.
$$
Therefore, 
$$
\||\nabla u|^{p-1}\|_{(\gamma , \na ^\prime , \infty)} \leq C C_0,
$$
which proves \eqref{38}.\end{proof}

\begin{lemma}
Every solution $u_n$ of \eqref{7} satisfies
\begin{gather}
\||\nabla u_n|^{p-1}\|_{(\gamma, \na ^\prime, \infty)} \leq C, \label{21} \\
\||u_n|^{p-1}\|_{(\gamma, \past /p, \infty)} \leq C, \label{22}\\
\||u_n|^{p-1}\|_{(\beta , q /p, \infty)} \leq C, \label{171}
\end{gather}
where $p<q\leq (N-1)p/(N-p)$ and $C$ is a positive constant that depends only on  $p$, $q$, $\vta( \Omega)$, $\vtb( \Omega)$, $N$, $\alpha$, $\gamma$, $\beta$, $\|b\|_{(\gamma ,\na , 1)}$, $\|c\|_{(\gamma , z^\prime , 1)}$,  $\|f\|_{1 , \Omega , \alpha}$, and $\|g\|_{1 , \partial \Omega ,  \beta}$.
\end{lemma}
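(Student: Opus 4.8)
The plan is to derive the a priori estimates \eqref{21}--\eqref{171} from Lemma \ref{42}, so the whole task reduces to verifying the energy-type inequality \eqref{39} for the solutions $u_n$, namely
\begin{equation*}
\int_{\Omega}|\nabla T_k(u_n)|^p \dvta + \int_{\partial \Omega}|T_k(u_n)|^p \dvtb \leq C_0 k, \quad \forall k>0,
\end{equation*}
with $C_0$ independent of $n$ and depending only on the quantities listed in the statement. Once this is in hand, Lemma \ref{42} gives \eqref{21}--\eqref{171} verbatim (note $\na' = \past/p$ identifications are consistent with the definitions $\na = (N+\gamma)p/(p-\alpha+\gamma)$ and $\past = (N+\gamma)p/(N+\alpha-p)$).

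First I would test the weak formulation \eqref{6} with $v = T_k(u_n) \in W^{1,p}(\Omega;\vta)$. Using \ref{136}, the diffusion term is bounded below by $\sigma \int_{\{|u_n|<k\}}|\nabla u_n|^p \dvta = \sigma \int_\Omega |\nabla T_k(u_n)|^p\dvta$, and the boundary term $\int_{\partial\Omega} K(u_n) T_k(u_n)\dvtb \geq \int_{\partial\Omega}|T_k(u_n)|^p\dvtb$ using $|s|^{p-1}\le K(s)\operatorname{sign}(s)$ together with $s\,T_k(s)\ge |T_k(s)|^2$ and $|T_k(s)|\le|s|$. The sign condition $G_n(x,s)s\ge 0$ from \eqref{25} makes $\int_\Omega G_n(x,u_n)T_k(u_n)\dvtao \ge 0$, so it can be dropped. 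On the right-hand side, $|\int_\Omega T_k(u_n) f_n\dvta| + |\int_{\partial\Omega}T_k(u_n)g_n\dvtb| \le k(\|f\|_{1,\Omega,\alpha} + \|g\|_{1,\partial\Omega,\beta})$ by \eqref{26} and $|T_k|\le k$. The one genuinely problematic term is the lower-order gradient term $\int_\Omega H_n(x,\nabla u_n) T_k(u_n)\dvtao$, which I would estimate by $\int_\Omega b(x)|\nabla u_n|^{p-1}|T_k(u_n)|\dvtao$ and split over $\{|u_n|<k\}$; on that set $|\nabla u_n|^{p-1} = |\nabla T_k(u_n)|^{p-1}$, so the term is at most $\|b\|_{(\gamma,\na,\infty)}\,\||\nabla T_k(u_n)|^{p-1} T_k(u_n)\|_{(\gamma,\na',1)}$ by the Hölder inequality \eqref{27}, and then one uses the embedding Propositions \ref{35}--\ref{36} to control $\||\nabla T_k(u_n)|^{p-1}T_k(u_n)\|$ by a product of $\|\nabla T_k(u_n)\|_{p,\Omega,\alpha}^{p-1}$ and $\|T_k(u_n)\|_{1,p,\Omega,\alpha}$, the second factor itself bounded via Proposition \ref{35} by $(\|\nabla T_k(u_n)\|_{p,\Omega,\alpha} + \|T_k(u_n)\|_{p,\partial\Omega,\beta})$. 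This produces a term behaving like $C\|b\| \cdot \big(\int_\Omega|\nabla T_k(u_n)|^p\dvta + \int_{\partial\Omega}|T_k(u_n)|^p\dvtb\big)$.

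The main obstacle, and the point requiring care, is precisely the absorption of this $H_n$-term into the left-hand side: a naive bound carries the full constant $C\|b\|$, which need not be small, so the inequality cannot simply be rearranged. The standard fix is to run the argument on a restricted integration set. Rather than testing with $T_k(u_n)$ globally, I would test with $T_k(u_n)\chi_{\{|u_n|>m\}}$-type truncations, or more precisely localize $b$: since $b \in L^{\na,1}(\Omega;\vtao) \subset L^{\na,\infty}$, for any $\varepsilon>0$ there is a set $E_\varepsilon$ with $\|b\chi_{\Omega\setminus E_\varepsilon}\|_{(\gamma,\na,\infty)} < \varepsilon$ and $b\chi_{E_\varepsilon} \in L^\infty$; the bad part is absorbed with $\varepsilon$ small enough (depending only on $\sigma$ and the embedding constants), while the bounded part contributes $C(\varepsilon,\|b\|_\infty)\int_{\{|u_n|<k\}}|\nabla u_n|^{p-1}|T_k(u_n)|$, which by Young's inequality is $\le \tfrac{\sigma}{2}\int_\Omega|\nabla T_k(u_n)|^p\dvta + C k^{p'}\vta(E_\varepsilon)^{\text{something}}$ — but this last piece is superlinear in $k$, which is too weak. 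So instead I would follow the scheme of \cite{betta2002existenrenor}: use the finer Lorentz regularity $b\in L^{\na,1}$ (not merely $L^{\na,\infty}$) precisely to get, via the Hölder inequality \eqref{27} in the $(\infty,1)$ duality with the roles arranged so that the $L^{\na',\infty}$ quasinorm of $|\nabla u_n|^{p-1}$ appears, the estimate $\int |H_n|\,|T_k(u_n)| \le \|b\|_{(\gamma,\na,1)} \||\nabla T_k(u_n)|^{p-1}\|_{(\gamma,\na',\infty)}\|T_k(u_n)\|_{(\gamma,?,\infty)}$, and then bootstrap: first prove a weaker estimate (with a larger constant or on a subdomain), deduce from Lemma \ref{42} a preliminary bound on $\||\nabla u_n|^{p-1}\|_{(\gamma,\na',\infty)}$, and feed it back. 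Concretely I would set $C_0 = C_0(n)$ a priori finite (since $u_n\in W^{1,p}$), establish $C_0(n) \le C_1 + C_2\, C_0(n)^{\theta}$ with $\theta<1$ or with $C_2$ small, and conclude a bound on $C_0(n)$ uniform in $n$; the exponent bookkeeping that makes $\theta<1$ work is exactly where the hypotheses $0\le\alpha-\gamma<p$, $N+\alpha-p>0$, $(\alpha-\gamma)N+\gamma p\ge 0$, $p-1<\alpha-\beta$ and the definition of $\na$ are consumed. I would keep track that all constants depend only on the listed data. With \eqref{39} established uniformly in $n$, Lemma \ref{42} closes the proof.
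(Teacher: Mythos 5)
Your reduction to Lemma \ref{42} is the right frame, and the treatment of the diffusion, boundary, $G_n$, and data terms is fine (the paper does exactly this in its ``first case''). But the mechanism you propose to close the estimate for the $H_n$-term has a genuine gap. Testing with $T_k(u_n)$ and using the Lorentz--H\"older inequality \eqref{27} gives exactly $\bigl|\int_\Omega H_n(x,\nabla u_n)T_k(u_n)\dvtao\bigr|\le k\,\|b\|_{(\gamma,\na,1)}\,\||\nabla u_n|^{p-1}\|_{(\gamma,\na',\infty)}$, and Lemma \ref{42} feeds back \emph{linearly}: $\||\nabla u_n|^{p-1}\|_{(\gamma,\na',\infty)}\le C C_0(n)$ with $C_0(n)$ itself containing $\sigma^{-1}\|b\|_{(\gamma,\na,1)}\||\nabla u_n|^{p-1}\|_{(\gamma,\na',\infty)}$. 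So the scalar inequality you can actually derive is $X\le aX+b$ with $a=C\sigma^{-1}\|b\|_{(\gamma,\na,1)}$, not $X\le C_1+C_2X^{\theta}$ with $\theta<1$: the growth $|H(x,\xi)|\le b|\xi|^{p-1}$ is exactly critical with respect to the energy, and no ``exponent bookkeeping'' using $0\le\alpha-\gamma<p$, $N+\alpha-p>0$, etc.\ makes it subcritical (those hypotheses are consumed by the embeddings inside Lemma \ref{42}, not here). Hence your bootstrap only closes when $\|b\|_{(\gamma,\na,1)}$ is small --- which is precisely the paper's first case --- and your alternative splitting $b=b\chi_{E_\varepsilon}+b\chi_{\Omega\setminus E_\varepsilon}$ fails for the reason you yourself note (the bounded part produces a contribution superlinear in $k$, destroying \eqref{39}).

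The missing idea, which is the heart of the paper's proof (following \cite{betta2002existenrenor}), is a decomposition in the \emph{level sets of $u_n$} rather than in the space variable: one uses the slicing functions $S_{\ell}$ and $S_{\ell_i,\ell_{i-1}}$ as test functions (composed with $T_k$), choosing the levels $\ell_1>\ell_2>\cdots$ so that each slice $Z_n\cap\{\ell_i<|u_n|<\ell_{i-1}\}$ has $\vtao$-measure exactly $\delta$, where $\delta$ is fixed by $C\sigma^{-1}\int_0^{\delta}b^{\ast}_{\gamma}(t)\,t^{1/\na}\,\textnormal{d}t/t=\tfrac12$. This is exactly where the second Lorentz index $1$ in $b\in L^{\na,1}(\Omega;\vtao)$ is used (absolute continuity of the norm makes the localized norm of $b$ small on small sets), so on each slice one is back in the small-$\|b\|$ regime and can absorb. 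Since $\vtao(\Omega)<\infty$, the number of slices is at most $I^{\ast}=1+[\vtao(\Omega)/\delta]$, independent of $n$, and a finite induction $X_i\le a_1(X_1+\cdots+X_{i-1})+a_2$ yields $X_i\le(a_1+1)^{i-1}a_2$ and hence \eqref{21}; only then does \eqref{69} hold with a $C_0$ uniform in $n$, and \eqref{22}, \eqref{171} follow from Lemma \ref{42}. You gesture toward this (``on a subdomain'', citing \cite{betta2002existenrenor}), but the concrete scheme you wrote down does not implement it. A minor additional slip: your parenthetical identification $\na'=\past/p$ is false in general (it would force $\alpha-\gamma=p$, excluded by \ref{138}); it is harmless only because Lemma \ref{42} provides the three estimates separately.
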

\begin{proof} We will proceed in two cases:\\

{\it First case: If  $\|b\|_{(\gamma , \na , 1)}$ is small enough.} \\

Using $T_k(u_n )$, $k>0$, as a test function in \eqref{6}, we obtain
\begin{equation}\label{23}
\begin{aligned}
&\int_{\Omega} A(\nabla u_n )  \nabla T_k(u_n ) \dvta +\int_{\Omega}  H_n (x,  \nabla u_n  ) T_k(u_n ) \dvtao + \int_{\Omega} G_n (x, u_n ) T_k(u_n ) \dvtao \\
& \qquad + \int _{\partial \Omega} K(u_n ) T_k(u_n ) \dvtb \leq \int_{\Omega} T_k(u_n ) f_n \dvta  + \int _{\partial \Omega} T_k(u_n ) g_n  \dvtb.
\end{aligned}
\end{equation}

From \ref{136}, we have
\begin{equation} \label{30}
\begin{aligned}
&\int_{\Omega} A(\nabla u_n )  \nabla T_k(u_n ) \dvta =\int_{\{u_n  \leq k\}} A( \nabla u_n )  \nabla u_n  \dvta \geq \sigma \int_{\{u_n  \leq k\}}|\nabla u_n |^p \dvta\\
&\qquad \geq \sigma \int_{\Omega}|\nabla T_k (u_n )|^p \dvta.
\end{aligned}
\end{equation}

On the other hand, by \eqref{24},  and using the generalized Hölder inequality \eqref{27} in the Lorentz spaces, we get
\begin{equation}\label{31}
\begin{aligned}
& \left|\int_{\Omega} H_n (x,  \nabla u_n ) T_k(u_n ) \dvtao \right|   \leq k \int_{\Omega} |H(x, \nabla u_n ) | \dvtao \leq k\int_{\Omega}  b|\nabla u_n |^{p-1}\dvtao  \\
& \qquad \leq k\|b\|_{(\gamma , \na , 1)} \||\nabla u_n |^{p-1}\|_{(\gamma , \na ^{\prime}, \infty)}.
\end{aligned}
\end{equation}
Using \eqref{25}, it results
\begin{equation}\label{32}
\int_{\Omega} G_n(x, u_n ) T_k(u_n ) \dvtao \geq 0 .
\end{equation}

Finally, we have
\begin{equation}\label{33}
\int_{\Omega}  T_k(u_n ) f_n \dvta \leq k\|f_n\|_{1,\Omega, \alpha} \quad \text { and } \quad \int_{\partial \Omega}  T_k(u_n) g_n \dvtb \leq k\|g_n\|_{1 , \partial \Omega , \beta }.
\end{equation}

Therefore, using \eqref{26} and \eqref{30} - \eqref{33}, we get
\begin{equation}\label{29}
\begin{aligned}
&\sigma \int_{\Omega} |\nabla T_k(u_n )|^p \dvta  + \sigma \int_{\partial \Omega} |T_k(u_n )|^p \dvtb \\
& \qquad  \leq k\left(\|b\|_{(\gamma , \na , 1)}\||\nabla u_n |^{p-1}\|_{(\gamma , \na ^{\prime}, \infty)} + \|f\|_{1,\Omega , \alpha} +   \|g\|_{1,\partial \Omega , \beta} \right).
\end{aligned}
\end{equation}
Let us define
\begin{equation}\label{28}
C_0= \frac{1}{\sigma} \left(\|b\|_{(\gamma , \na , 1)}\||\nabla u_n |^{p-1}\|_{(\gamma , \na^{\prime}, \infty)} +  \|f\|_{1,\Omega , \alpha} +   \|g\|_{1,\partial \Omega , \beta}  \right).
\end{equation}

Inequality \eqref{29} becomes
\begin{equation} \label{69}
\int_{\Omega}|\nabla T_k(u_n )|^p \dvta + \int_{ \partial \Omega}|T_k(u_n) |^p \dvtb  \leq C_0 k, \quad \forall k>0 .
\end{equation}
By Lemma \ref{42}, we get
$$
\||\nabla u_n |^{p-1}\|_{(\gamma, \na ^{\prime}, \infty)} \leq C C_0.
$$

If $\|b\|_{(\gamma , \na , 1)}$ is small enough,
$$
C \frac{1}{\sigma}\|b\|_{(\gamma , N, 1)}<\frac{1}{2},
$$
we obtain
$$
\||\nabla u_n|^{p-1}\|_{(\gamma , \na^{\prime}, \infty)} \leq  C,
$$
i.e., \eqref{21}.\\


{\it Second case: If  $\|b\|_{(\gamma , \na , 1)}$ can take any value.}\\

{\it Step 1.} As in \cite{betta2002existenrenor}, we define the following set $Z_n$. Since $\vtao(\Omega)$ is finite, the set $\mathcal{I}$ of the constants $c$ such that $\vtao (\{ |u_n |=c\})>0$ is at most countable. Let $\hat{Z}_{n}=\cup _{c\in \mathcal{I}} \{|u_n|=c\}$. Its complement  $Z_n= \Omega \backslash \hat{Z} _{n}$ is the union of sets such that  $\vtao (\{|u_{n}|=c\})=0$. Since, for every $c$,
\begin{equation}\label{51}
\nabla u_{n}=0 \quad \text { a.e. on } \{ |u_{n}|=c\},
\end{equation}
and since $\hat{Z}_{n}$ is at most a countable union, we obtain
\begin{equation}\label{45}
\nabla u_{n}=0 \quad \text { a.e. on } \hat{Z} _n.
\end{equation}

Since the constants $c$ such that  $\vtao (\{|u_n|=c\})>0$  have been eliminated by considering $Z_n$, it results that for $\ell_i$ fixed and $0<\ell<\ell_i$, the function
\begin{equation}\label{44}
\ell \in (0,\ell _i) \mapsto \vtao (Z_n \cap \{\ell<|u_n|< \ell_i \}) \text { is continuous}.
\end{equation}

Define for $\ell>0$, 
$$
S_\ell(s)= 
\left\{
\begin{aligned}
& 0 & & \text { if } |s| \leq \ell, \\ 
& (|s|-\ell) \operatorname{sign}(s) & &  \text { if }  |s|>\ell .
\end{aligned}
\right.
$$

Using in \eqref{6} the test function $T_k(S_\ell(u_n ))$ with $\ell$ to be specified later, we obtain
\begin{equation} \label{46}
\begin{aligned}
&\int_{\Omega} A(\nabla u_n )  \nabla T_k(S_\ell(u_n )) \dvta +\int_{\Omega}  H_n (x,  \nabla u_n  ) T_k(S_\ell(u_n )) \dvtao\\
& \qquad \quad \ \ + \int_{\Omega} G_n (x, u_n ) T_k(S_\ell(u_n )) \dvtao + \int _{\partial \Omega} K(u_n ) T_k(S_\ell(u_n )) \dvtb\\
& \qquad \leq \int_{\Omega} T_k(S_\ell(u_n )) f_n \dvta + \int _{\partial \Omega} T_k(S_\ell(u_n )) g_n  \dvtb.
\end{aligned}
\end{equation}

We have
\begin{equation}
\begin{aligned}
&\int_{\Omega} A(\nabla u_n ) \nabla T_k (S_\ell(u_n ))\dvta \geq \int_{\{\ell \leq u_n \leq \ell+k\}} A(\nabla u_n)  \nabla u_n \dvta \\
&\qquad \geq \sigma \int_{\Omega}  |\nabla T_k(S_\ell(u_n ))|^p \dvta ,
\end{aligned}
\end{equation}
\begin{equation}
\int_{\Omega} G_n (x,u_n )  T_k(S_\ell(u_n )) \dvtao \geq 0,
\end{equation}
\begin{equation}
\int_{\Omega}  T_k (S_\ell (u_n)) f_n \dvta \leq k\|f_n\|_{1, \Omega , \alpha}  \text { and }  \int_{\partial \Omega}  T_k (S_\ell (u_n)) g_n \dvtb \leq k\|g_n\|_{ 1 , \partial \Omega , \beta}.
\end{equation}

Let us now estimate
$$
\left| \int _{\Omega} H_n (x,\nabla u_n) T_k (S_\ell (u_n)) \dvtao\right|. 
$$ 
Using $S_\ell (s) =0$ for $|s|\leq \ell$,  \eqref{24}, and \eqref{45}, we have
\begin{align}
&\left|\int _{\Omega} H_n (x,\nabla u_n) T_k (S_{\ell} (u_n)) \dvtao \right|\leq k\int _{\{|u_n| > \ell\}} b|\nabla u_n|^{p-1}\dvtao\nonumber\\
& \qquad =k\int_{Z_n \cap \{|u_n|>\ell\}} b|\nabla S_\ell(u_n)|^{p-1} \dvtao \nonumber\\
& \qquad \leq k \|b\|_{(\gamma , \na , 1 , Z_n \cap\{u_n>\ell\})}\| |\nabla S_\ell (u_n)|^{p-1}\|_{(\gamma , \na ^\prime , \infty)}\label{43}
\end{align}

Combining \eqref{46} - \eqref{43} we have, for all $k>0$,
$$
\int_{\Omega}  |\nabla T_k(S_\ell(u_n ))|^p \dvta  + \int_{\partial \Omega}  | T_k(S_\ell(u_n ))|^p \dvtb  \leq \ell_1 k ,
$$
where $\ell_1$ is defined by
$$
\ell_1=\frac{1}{\sigma} \left(  \|b\|_{(\gamma , \na , 1 , Z_n \cap\{u_n>\ell\})}\| |\nabla S_\ell (u_n)|^{p-1}\|_{(\gamma , \na ^\prime , \infty)}+  \|f\|_{1,\Omega , \alpha} +   \|g\|_{1,\partial \Omega , \beta}  \right).
$$

By Lemma \ref{42}, we get
\begin{equation}\label{49}
\begin{aligned}
&\||\nabla S_\ell (u_n)| ^{p-1}\|_{(\gamma , \na ^\prime , \infty)}\\
&\ \ \leq  \frac{C}{\sigma} \left(  \|b\|_{(\gamma , \na , 1 , Z_n \cap\{u_n>\ell\})}\| |\nabla S_\ell (u_n)|^{p-1}\|_{(\gamma , \na ^\prime , \infty)}+  \|f\|_{1,\Omega , \alpha} +   \|g\|_{1,\partial \Omega , \beta}  \right)
\end{aligned}
\end{equation}

Since the decreasing rearrangement of the restrictions $\left.b\right|_{ E}$ and $b$ satisfies
\begin{equation}\label{54}
(\left.b \right|_{  E} )^\ast _{\gamma} (t) \leq b^\ast _{\gamma} (t), \quad t \in [0,\vtao ( E )],
\end{equation}
for any measurable set $E$, we have
\begin{equation}\label{55}
\begin{aligned}
& \|b\|_{(\gamma , \na , 1 , Z_n \cap\{u_n>\ell\})}  =\int_0^{\vtao (Z_n \cap \{|u_n|>\ell \})} (\left. b \right|  _{ Z_n \cap \{|u_n|>\ell\} } )^\ast  _{\gamma}(t) t^{1 / N} \frac{\dt}{t} \\
& \qquad  \leq \int_0^{\vtao ( Z_n \cap \{|u_n|>\ell\} ) }  b ^\ast _{\gamma}(t) t^{1 / N} \frac{\dt}{t} .
\end{aligned}
\end{equation}

In the case where
\begin{equation}\label{47}
\frac{C}{\sigma} \int_0^{\vtao (Z_n \cap \{|u_n|>0 \})} b^\ast _{\gamma}(t) t^{1 / N} \frac{\dt}{t} \leq \frac{1}{2},
\end{equation}
we choose $\ell=\ell_1=0$. If \eqref{47}  does not hold, we can choose $\ell=\ell_1>0$ such that
$$
\frac{C}{\sigma} \int_0^{\vtao (Z_n \cap \{|u_n|>\ell_1 \})} b^\ast _{\gamma}(t) t^{1 / N} \frac{\dt}{t} =\frac{1}{2} ;
$$
indeed, the function $\ell \rightarrow \vtao (Z_n \cap\{|u_n|>\ell \})$ is continuous (see \eqref{44}),  decreasing,  and tends to $0$ when $\ell$ tends to $\infty$. Note that $\ell_1$ actually depends on $n$ and $\gamma$.

Moreover, we define $\delta$ by
\begin{equation}\label{48}
\frac{C}{\sigma} \int_0^\delta   b^\ast _{\gamma}(t) t^{1 / N} \frac{\mathrm{d} t}{t}=\frac{1}{2},
\end{equation}
observe that $\delta$ does not depend on $n$. We have
\begin{equation}\label{63}
\vtao (Z_n \cap\{|u_n|>\ell_1\})=\delta .
\end{equation}

With this choice of $\ell=\ell_1$, we obtain from \eqref{49}
\begin{equation} \label{67}
\||\nabla S_{\ell_1} (u_n)| ^{p-1}\|_{(\gamma , \na ^\prime , \infty)}\\
\leq  \frac{2C}{\sigma} \left(  \|f\|_{1,\Omega , \alpha} +   \|g\|_{1,\partial \Omega , \beta}  \right).
\end{equation}\\


{\it Step 2.} Define, for $0 \leq \ell<\ell_1$, the function $S_{\ell, \ell_1}$ as
\begin{equation}
S_{\ell, \ell_1}(s)= \left\{
\begin{aligned}
&\ell_1-\ell & & \text { if }   s>\ell_1, \\ 
&s-\ell   & & \text { if }  \ell \leq s \leq \ell_1, \\ 
&0     & & \text { if }   -\ell \leq s \leq \ell, \\ 
&s+\ell   & & \text { if }  -\ell_1 \leq s \leq-\ell, \\ 
&\ell-\ell_1 & & \text { if }  s<-\ell_1 .
\end{aligned}
\right.
\end{equation}

Using in \eqref{6} the test function $T_k(S_{\ell, \ell_1}(u_n))$ with $\ell$ to be specified later, we obtain
\begin{equation}\label{53}
\begin{aligned}
&\int_{\Omega} A(\nabla u_n)  \nabla T_k(S_{\ell, \ell_1}(u_n)) \dvta +\int_{\Omega}  H_n (x,  \nabla u_n) T_k(S_{\ell, \ell_1}(u_n)) \dvtao \\
&\qquad \quad \ \ +\int_{\Omega} G_n(x, u_n) T_k(S_{\ell, \ell_1}(u_n)) \dvtao + \int _{\partial \Omega} K(u_n) T_k(S_{\ell, \ell_1}(u_n)) \dvtb \\
&\qquad =\int_{\Omega} T_k(S_{\ell, \ell_1}(u_n)) f_n \dvta  + \int _{\partial \Omega} T_k(S_{\ell, \ell_1}(u_n)) g_n  \dvtb,
\end{aligned}
\end{equation}
As in the previous step, we have
\begin{equation}
\int_{\Omega} A(\nabla u_n ) \nabla T_k (S_{\ell,\ell_1}(u_n ))\dvta\geq \sigma \int_{\Omega}  |\nabla T_k(S_{\ell,\ell_1} (u_n ))|^p \dvta ,
\end{equation}
\begin{equation}
\int_{\Omega} G_n (x,u_n )  T_k(S_{\ell,\ell_1} (u_n )) \dvtao \geq 0,
\end{equation}
\begin{equation}
\int_{\Omega}  T_k (S_{\ell,\ell_1} (u_n)) f_n \dvta \leq k\|f\|_{1 , \Omega  , \alpha}  \text { and }  \int_{\partial \Omega}  T_k (S_{\ell,\ell_1} (u_n)) g_n \dvtb \leq k\|g\|_{1 , \partial \Omega , \beta}.
\end{equation}
 
Moreover, using $S_{\ell, \ell_1}(s)=0$ for $|s| \leq \ell$ and \eqref{24}, we have
\begin{equation}\label{50}
\begin{aligned}
& \left|\int_{\Omega} H_n(x, \nabla u_n) T_k(S_{\ell, \ell_1}(u_n)) \dvtao \right|  \leq k \int_{\left\{\left|u_n\right|>\ell\right\}} b|\nabla u_n|^{p-1}\dvtao  \\
& \qquad \leq k\left( \int_{\{\ell<|u_n|<\ell_1\}} b|\nabla u_n|^{p-1} \dvtao +\int_{\{|u_n| \geq \ell_1\}} b |\nabla u_n|^{p-1}\dvtao \right) .
\end{aligned}
\end{equation}
Let us estimate each term of the right-hand side of \eqref{50}. Using property \eqref{51} of $Z_n$ and the generalized Hölder inequality  in the Lorentz spaces, we have
\begin{equation}
\begin{aligned}
& \int_{\{\ell<|u_n|<\ell_1 \}} b |\nabla u_n|^{p-1} \dvtao  =\int_{Z_n \cap \{\ell<|u_n|<\ell_1 \} } b|\nabla S_{\ell,\ell_1}(u_n)|^{p-1} \dvtao \\
& \qquad \leq \|b\|_{(\gamma , \na , 1 , Z_n \cap\{\ell<|u_n|<\ell_1 \})}\| |\nabla S_{\ell,\ell_1} (u_n)|^{p-1}\|_{(\gamma , \na ^\prime , \infty)}.
\end{aligned}
\end{equation}

Similarly, for the second term of the right-hand side of \eqref{50} we have:
\begin{equation*}
\begin{aligned}
& \int_{\{ |u_n|\geq \ell_1 \}} b |\nabla u_n|^{p-1} \dvtao  =\int_{\Omega} b|\nabla S_{\ell_1}(u_n)|^{p-1} \dvtao \\
& \qquad \leq \|b\|_{(\gamma , \na , 1 )}\| |\nabla S_{\ell_1} (u_n)|^{p-1}\|_{(\gamma , \na ^\prime , \infty)}.
\end{aligned}
\end{equation*}

Therefore, we have:
\begin{equation}\label{52}
\begin{aligned}
& \left|\int_{\Omega} H_n (x, \nabla u_n ) T_k (S_{\ell, \ell_1} (u_n))\dvtao \right| \\
& \qquad \leq k\left( \|b\|_{(\gamma , \na , 1 , Z_n \cap\{\ell<|u_n|<\ell_1 \})}\| |\nabla S_{\ell,\ell_1} (u_n)|^{p-1}\|_{(\gamma , \na ^\prime , \infty)} \right. \\
& \qquad \quad \ \ + \left.\|b\|_{(\gamma , \na , 1 )}\| |\nabla S_{\ell_1} (u_n)|^{p-1}\|_{(\gamma , \na ^\prime , \infty)}\right) .
\end{aligned}
\end{equation}

Combining \eqref{53} - \eqref{52} we have, for all $k>0$
$$
\int_{\Omega}  |\nabla T_k(S_{\ell,\ell_1}(u_n ))|^p \dvta  + \int_{\partial \Omega}  | T_k(S_{\ell,\ell_1}(u_n ))|^p \dvtb \leq \ell_2 k ,
$$
where $\ell_2$ is defined by
$$
\begin{aligned}
&\ell_2=  \frac{1}{\sigma} \left(\|b\|_{(\gamma , \na , 1 , Z_n \cap\{\ell<|u_n|<\ell_1 \})}\| |\nabla S_{\ell,\ell_1} (u_n)|^{p-1}\|_{(\gamma , \na ^\prime , \infty)} \right. \\
& \qquad + \left.\|b\|_{(\gamma , \na , 1 )}\| |\nabla S_{\ell_1} (u_n)|^{p-1}\|_{(\gamma , \na ^\prime , \infty)} +  \|f\|_{1,\Omega , \alpha} +   \|g\|_{1,\partial \Omega , \beta} \right).
\end{aligned}
$$

By Lemma \ref{42}, we get:
\begin{equation}\label{57}
\begin{aligned}
& \||\nabla S_{\ell, \ell_1} (u_n )|^{p-1}\|_{(\gamma , \na ^{\prime} , \infty )} \\
& \qquad \leq \frac{C}{\sigma} \left(\|b\|_{(\gamma , \na , 1 , Z_n \cap\{\ell<|u_n|<\ell_1 \})}\| |\nabla S_{\ell,\ell_1} (u_n)|^{p-1}\|_{(\gamma , \na ^\prime , \infty)} \right. \\
& \qquad \quad \ \ + \left.\|b\|_{(\gamma , \na , 1 )}\| |\nabla S_{\ell_1} (u_n)|^{p-1}\|_{(\gamma , \na ^\prime , \infty)} +  \|f\|_{1,\Omega , \alpha} +   \|g\|_{1,\partial \Omega , \beta} \right).
\end{aligned}
\end{equation}

Similarly to \eqref{55}, using \eqref{54}, we have
\begin{equation}\label{60}
\|b\|_{(\gamma, \na, 1 , Z_n \cap\{\ell<|u_n|<\ell_1\})} \leq \int_0^{\vtao ( Z_n \cap \{\ell< |u_n |<\ell_1\})}b^\ast _{\gamma}(t) t^{1 / N} \frac{\dt}{t} .
\end{equation}

In the case where
\begin{equation}\label{56}
 \frac{C}{\sigma} \int_0^{\vtao ( Z_n \cap \{0< |u_n |<\ell_1 \}) } b^\ast _{\gamma}(t) t^{1 / N} \frac{\dt}{t} \leq \frac{1}{2},
\end{equation}
we choose $\ell=\ell_2=0$. If \eqref{56} does not hold, since  the function $\ell \rightarrow \vtao (Z_n \cap \{\ell< |u_n |<\ell_1\})$ is continuous, we can choose $\ell=\ell_2>0$ such that
$$
 \frac{C}{\sigma} \int_0^{\vtao ( Z_n \cap \{\ell_2< |u_n |<\ell_1 \}) } b^\ast _{\gamma}(t) t^{1 / N} \frac{\dt}{t} = \frac{1}{2}.
$$
Note that $\ell_2$ actually depends on $n$ and $\gamma$. We have
\begin{equation}\label{64}
\left|Z_n \cap\left\{\ell_2<\left|u_n\right|<\ell_1\right\}\right|=\delta,
\end{equation}
where $\delta$ is defined by \eqref{48}. With this choice of $\ell=\ell_2$, we obtain from \eqref{57}
\begin{equation}\label{68}
\begin{aligned}
& \||\nabla S_{\ell_2, \ell_1}(u_n)|^{p-1}\|_{(\gamma , \na ^{\prime}, \infty)} \\
& \qquad \leq \frac{2C}{\sigma} \left(\|b\|_{(\gamma , \na , 1 )}\| |\nabla S_{\ell_1} (u_n)|^{p-1}\|_{(\gamma , \na ^\prime , \infty)} +  \|f\|_{1,\Omega , \alpha} +   \|g\|_{1,\partial \Omega , \beta} \right).
\end{aligned}
\end{equation}\\


{\it Step 3.} Define for $0 \leq \ell<\ell_2$ the function $S_{\ell, \ell_2}$ :
$$
S_{\ell, \ell_2}(s)= \left\{ 
\begin{aligned}
& \ell_2-\ell & & \text { if }   s>\ell_2, \\ 
& s-\ell & & \text { if }   \ell \leq s \leq \ell_2, \\ 
& 0 & & \text { if }  -\ell \leq s \leq \ell, \\ 
& s+\ell & & \text { if }   -\ell_2 \leq s \leq-\ell, \\
& \ell-\ell_2 & & \text { if }  s<-\ell_2,
\end{aligned}\right.
$$
for every $s \in \mathbb{R}$. Using in \eqref{6} the test function $T_k\left(S_{\ell, \ell_2}\left(u_n\right)\right)$ with $\ell$ to be specified later, we obtain
\begin{equation}\label{58}
\begin{aligned}
&\int_{\Omega} A(\nabla u_n)  \nabla T_k(S_{\ell, \ell_2}(u_n)) \dvta +\int_{\Omega}  H_n (x,  \nabla u_n) T_k(S_{\ell, \ell_2}(u_n)) \dvtao \\
&\qquad \quad \ \ +\int_{\Omega} G_n(x, u_n) T_k(S_{\ell, \ell_2}(u_n)) \dvtao + \int _{\partial \Omega} K(u_n) T_k(S_{\ell, \ell_2}(u_n)) \dvtb \\
&\qquad =\int_{\Omega} T_k(S_{\ell, \ell_2}(u_n)) f_n \dvta  + \int _{\partial \Omega} T_k(S_{\ell, \ell_2}(u_n)) g_n  \dvtb,
\end{aligned}
\end{equation}

As in \eqref{50} - \eqref{52}, we have
\begin{align*}
&\left|\int_{\Omega} H_n (x,  \nabla u_n) T_k (S_{\ell, \ell_2}(u_n))\dvtao \right|\\
& \qquad \leq k\left(\int_{\{\ell<|u_n|<\ell_2\}} b|\nabla u_n|^{p-1} \dvtao +\int_{\{\ell_2<|u_n|<\ell_1 \}} b|\nabla u_n|^{p-1}\dvtao \right. \\
&\qquad \quad \ \ \left. +\int_{\{|u_n| \geq \ell_1\}} b|\nabla u_n |^{p-1} \dvtao \right) \\
& \qquad \leq k\left( \|b\|_{(\gamma , \na , 1 , Z_n \cap \{\ell<|u_n|<\ell_2\})}  \| |\nabla S_{\ell, \ell_2} (u_n )|^{p-1}\|_{(\gamma , \na ^{\prime}, \infty)}\right. \\
& \qquad \quad \ \ +\|b\|_{(\gamma , \na, 1)} \| |\nabla S_{\ell_2, \ell_1} (u_n) |^{p-1}\|_{(\gamma , \na^{\prime}, \infty)} \\
& \qquad \quad \ \ \left.+\|b\|_{(\gamma, \na , 1)}  \| |\nabla S_{\ell_1} (u_n)|^{p-1}\|_{(\gamma , \na^{\prime}, \infty )}\right).
\end{align*}

As in \eqref{57}, we deduce
\begin{equation}\label{59}
\begin{aligned}
& \||\nabla S_{\ell, \ell_2} (u_n )|^{p-1}\|_{(\gamma , \na ^{\prime} , \infty )} \\
& \qquad \leq \frac{C}{\sigma} \left( \|b\|_{(\gamma , \na , 1 , Z_n \cap\{\ell<|u_n|<\ell_2 \})}\| |\nabla S_{\ell,\ell_2} (u_n)|^{p-1}\|_{(\gamma , \na ^\prime , \infty)} \right. \\
& \qquad \quad \ \ +  \|b\|_{(\gamma , \na , 1 )} \| |\nabla S_{\ell_2,\ell_1} (u_n)|^{p-1}\|_{(\gamma , \na ^\prime , \infty)}\\
& \qquad \quad \ \ + \left.\|b\|_{(\gamma , \na , 1 )}\| |\nabla S_{\ell_1} (u_n)|^{p-1}\|_{(\gamma , \na ^\prime , \infty)} +  \|f\|_{1,\Omega , \alpha} +   \|g\|_{1,\partial \Omega , \beta} \right).
\end{aligned}
\end{equation}
Arguing as in \eqref{60},
$$
\|b\|_{(\gamma , \na , 1,(Z_n \cap\{\ell<|u_n|<\ell_2\})} \leq \int_0^{\vtao (Z_n \cap \{\ell<|u_n|<\ell_2\})} b^\ast _{\gamma}(t) t^{1 / N} \frac{\dt}{t} .
$$

In the case where
\begin{equation}\label{61}
\frac{C}{\sigma} \int_0^{\vtao ( Z_n \cap \{0<|u_n|<\ell_2\})} b^\ast _{\gamma}(t) t^{1 / N} \frac{\dt}{t} \leq \frac{1}{2},
\end{equation}
we choose $\ell=\ell_3=0$. If \eqref{61} does not hold, we can choose $\ell=\ell_3>0$ such that
$$
\frac{C}{\sigma} \int_0^{\vtao( Z_n \cap \{\ell_3<|u_n|<\ell_2\})} b^\ast _{\gamma}(t) t^{1 / N} \frac{\dt}{t}=\frac{1}{2} .
$$
Note that $\ell_3$ actually depends on $n$ and $\gamma$. We have
\begin{equation}\label{65}
\vtao ( Z_n \cap \{\ell_3<|u_n|<\ell_2\})=\delta,
\end{equation}
where $\delta$ is defined by \eqref{48}. With this choice of $\ell=\ell_3$, we obtain from \eqref{59}
\begin{equation}\label{62}
\begin{aligned}
& \||\nabla S_{\ell_3, \ell_2} (u_n )|^{p-1}\|_{(\gamma , \na ^{\prime} , \infty )} \\
& \qquad \leq \frac{2C}{\sigma} \left( \|b\|_{(\gamma , \na , 1 )} \| |\nabla S_{\ell_2,\ell_1} (u_n)|^{p-1}\|_{(\gamma , \na ^\prime , \infty)}\right.\\
& \qquad \quad \ \  + \left.\|b\|_{(\gamma , \na , 1 )}\| |\nabla S_{\ell_1} (u_n)|^{p-1}\|_{(\gamma , \na ^\prime , \infty)} +  \|f\|_{1,\Omega , \alpha} +   \|g\|_{1,\partial \Omega , \beta} \right).
\end{aligned}
\end{equation}\\


{\it Steep 4.} We repeat this procedure until the time it stops, i.e., when we arrive to some $i=I$ (which depends on $n$ and $\gamma$) for which we have
$$
\frac{C}{\sigma} \int_0^{\vtao ( Z_n \cap \{0< |u_n |<\ell_{I-1}\})} b^\ast _{\gamma}(t) t^{1 / N} \frac{\dt}{t} \leq \frac{1}{2},
$$
then we choose
$$
\ell_I=0.
$$
Let us now estimate $I$. We have
$$
\begin{aligned}
& \vtao ( \Omega) \geq \vtao ( Z_n )  \geq  \vtao ( Z_n \cap\{|u_n|>\ell_1\})+ \vtao (Z_n \cap \{\ell_2<|u_n|<\ell_1\}) \\
&\qquad  +\vtao (Z_n \cap \{\ell_3<|u_n|<\ell_2\}) + \cdots + \vtao ( Z_n \cap \{\ell_{I-1}<|u_n|<\ell_{I-2}\})
\end{aligned}
$$
and, in view of \eqref{63}, \eqref{64}, and \eqref{65} we know that
$$
\begin{aligned}
& \vtao ( Z_n \cap \{|u_n|>\ell_1\})  =\vtao ( Z_n \cap  \{\ell_2< |u_n |<\ell_1 \} )= \cdots \\
& \qquad = \vtao ( Z_n \cap \{\ell_{I-1}< |u_n |<\ell_{I-2} \}) =\delta,
\end{aligned}
$$
where $\delta$ is defined by \eqref{48}, and does not depend on $n$. Therefore, $(I-1) \delta \leq \vtao ( \Omega )$, and
$$
I \leq I^*=1+\left[\frac{\vtao (\Omega)}{\delta}\right],
$$
where $[s]$ denotes the integer part of $s$.

Observe that $I$ is estimated by the number $I^*$ which does not depend on $n$, and which depends on $b^\ast _\gamma$ and  $\delta$. We define
\begin{equation}\label{66}
\ell_0=\infty, \quad S_{\ell_1, \ell_0}=S_{\ell_1},
\end{equation}
and defining
\begin{align*}
&X_i =\||\nabla S_{\ell_i, \ell_{i-1}} (u_n)|^{p-1}\|_{(\gamma ,\na^{\prime}, \infty)}, \quad \text { for } 1 \leq i \leq I ,\\
&a_1 = \frac{2C}{\sigma} \|b\|_{(\gamma , \na , 1)}, \\
&a_2 = \frac{2C}{\sigma} \left( \|f\|_{1,\Omega , \alpha} +   \|g\|_{1,\partial \Omega , \beta} \right).
\end{align*}
Observe that
$$
X_1=\||\nabla S_{\ell_1, \ell_0}(u_n)|^{p-1}\|_{(\gamma , \na ^{\prime}, \infty)} = \||\nabla S_{\ell_1}(u_n)|^{p-1}\|_{(\gamma , \na ^{\prime}, \infty)} .
$$

From \eqref{67}, \eqref{68}, \eqref{62}, and \eqref{66}, we see
\begin{align*}
& X_1 \leq a_2, \quad X_2 \leq a_1 X_1+a_2, \quad  X_3 \leq a_1 X_2 + a_1  X_1+a_2, \quad \ldots, \\
& X_I \leq a_1 X_{I-1}+\cdots+a_1 X_1+a_2, \quad I \leq I^* .
\end{align*}

It can be proved by induction that
$$
X_i \leq (a_1 + 1)^{i-1} a_2 \quad \text { for } 1 \leq i \leq I .
$$
Since $\ell_I=0$, we have
$$
|\nabla u_n|^{p-1}=\sum_{i=1}^I |\nabla u_n|^{p-1} \chi_{\{\ell_i<|u_n|<\ell_{i-1}\}}=\sum_{i=1}^I |\nabla S_{\ell_i, \ell_{i-1}} (u_n)|^{p-1}.
$$
Therefore, 
\begin{align*}
&\| |\nabla u_n |^{p-1} \|_{( \gamma , \na^{\prime}, \infty )}  \leq \sum_{i=1}^I \| |\nabla S_{\ell_i, \ell_{i-1}} (u_n ) |^{p-1} \|_{(\gamma , \na ^{\prime}, \infty)} \\
& \qquad \leq \sum_{i=1}^I X_i  \leq a_2 \sum_{i=1}^I (a_1+1)^{i-1}\\
& \qquad =a_2 \frac{(a_1+1)^I-1}{a_1} \leq \frac{a_2}{a_1}[(a_1+1)^{I^\ast}-1],
\end{align*}
i.e., the desired result, \eqref{21}.

Let us finally prove the result \eqref{22}. From \eqref{69}, we have 
$$
\int_{\Omega} |\nabla T_k (u_n)|^p \dvta + \int_{\partial \Omega} |T_k (u_n)|^p \dvtb  \leq C_0 k, \quad \forall k>0,
$$
where the constant $C_0$ defined by \eqref{28} is now bounded independently on $n$ in view of \eqref{21}. The results \eqref{22} and \eqref{171} then follow from \eqref{37} and \eqref{170}, respectively. \end{proof}


\section{Existence results for renormalized solutions} \label{161}

In this section, we prove our main results, which give the existence of a renormalized solution to problem \eqref{5}.

\renewcommand*{\proofname}{{\bf Proof of Theorem \ref{146}}}

\begin{proof} $ $\\

{\it Step 1.  A priori estimates}. \\

Using $T_k (u_n)$ for $k>0$, as test function in \eqref{6} we have
\begin{equation*}
\begin{aligned}
&\int_{\Omega} A(\nabla u_n)  \nabla T_k (u_n) \dvta +\int_{\Omega}  H_n(x,  \nabla u_n) T_k (u_n) \dvtao \\
& \qquad \quad \ \ + \int_{\Omega} G_n\left(x, u_n\right) T_k (u_n) \dvtao  + \int _{\partial \Omega} K(u_n) T_k (u_n) \dvtb \\
& \qquad =\int_{\Omega} T_k (u_n) f_n \dvta  + \int _{\partial \Omega} T_k (u_n) g_n  \dvtb,
\end{aligned}
\end{equation*}
which implies, by \ref{136} and \ref{137},

\begin{align*}
&\sigma \int_{\Omega}  \left|\nabla T_k (u_n)\right|^p \dvta +  \int _{\partial \Omega} |T_k (u_n)|^p \dvtb\\
& \qquad \leq  k\int_{\Omega} b|\nabla u_n|^{p-1} \dvtao + k\|f_n\|_{1, \Omega , \alpha} + k \|g_n\|_{ 1, \partial \Omega , \beta} .
\end{align*}

By \eqref{21} and  \eqref{26} we get
\begin{equation}\label{70}
\int_{\Omega} |\nabla T_k (u_{n})|^p \dvta + \int _{\partial \Omega} |T_k (u_n)|^p \dvtb\leq C k
\end{equation}
for a suitable positive constant $C$  does not depend on $k$ and $n$. We deduce that, for every $k>0$,
\begin{equation}\label{96}
T_k (u_n ) \text { is bounded in } W^{1, p}(\Omega ; \vta).
\end{equation}

Moreover taking into account \ref{136} and \eqref{70}, we obtain that for any $k>0$
\begin{equation}\label{99}
A(\nabla T_k (u_n)) \text { is bounded in } (L^{p^{\prime}}(\Omega ; \vta))^N ,
\end{equation}
uniformly with respect to $n$.\\


{\it Step 2.  We prove}
\begin{equation}\label{8}
\begin{gathered} 
 \vta (\{x \in \Omega \:|\: |u_n(x)| > L \}) \leq \frac{C}{\ln (1+L)},\\   
 \vtb (\{x \in \partial \Omega \:|\: |u_n(x)| > L \}) \leq \frac{C}{\ln (1+L)},
 \end{gathered}
\end{equation}
{\it for all $n$, where $C>0$ is a constant independent of $n$.}\\

For $q\geq 1$, we consider the function
$$
\Psi_q(r)=\int_0^r \frac{1}{(1+|t|)^q} \dt, \quad \forall r \in \mathbb{R}.
$$
For $q=1$, we have
\begin{equation}\label{75}
\Psi_1(r)=\int_0^{r} \frac{1}{1+|t|} \dt=\operatorname{sign}(r) \ln (1+|r|), 
\end{equation}
and by H\"older inequality,
\begin{equation}\label{76}
|\Psi_1(r)|^p= \left| \int_0^{r} \frac{1}{1+|t|} \dt \right|^p \leq |r|^{p/p^\prime} \left|\int_0^{r} \frac{1}{(1+|t|)^p} \dt \right| = |r|^{p-1} |\Psi_p (r)|  .
\end{equation}

Using $\Psi_p (u_n)$  as a test function in \eqref{6}, we get
\begin{equation}\label{74}
\begin{aligned}
&\int_{\Omega} A(\nabla u_n)  \nabla \Psi_p (u_n)  \dvta +\int_{\Omega}  H_n (x,  \nabla u_n) \Psi_p (u_n) \dvtao \\
&\qquad \quad \ \ + \int_{\Omega} G_n(x, u_n) \Psi_p (u_n) \dvtao  + \int _{\partial \Omega} K(u_n) \Psi_p (u_n) \dvtb \\
& \qquad =\int_{\Omega} \Psi_p (u_n) f_n \dvta  + \int _{\partial \Omega} \Psi_p (u_n) g_n  \dvtb.
\end{aligned}
\end{equation}

By \ref{136}, \eqref{24}, \eqref{25}, and since $\|\Psi_p(u_n)\|_{L^{\infty}(\Omega)} \leq \frac{1}{p-1}$, we get
\begin{align*}
&\sigma \int_{\Omega} \frac{|\nabla u_n |^p}{ (1+|u_n|)^p} \dvta + \int _{\partial \Omega} K(u_n) \Psi _p (u_n) \dvtb  \\
& \qquad \leq \frac{1}{p-1} \int_{\Omega} b |\nabla u_n| ^{p-1} \dvtao + \frac{1}{p-1}\|f_n\|_{1 ,\Omega , \alpha } + \frac{1}{p-1}\|g_n\|_{ 1 , \partial \Omega , \beta}.
\end{align*}
By \ref{136}, \eqref{26}, \eqref{21}, \eqref{75}, and \eqref{76}, we have
$$
\int_{\Omega} |\nabla \Psi _1 (u_n)|^p \dvta + \int _{\partial \Omega} |\Psi _1 (u_n)|^{p}  \dvtb  \leq C, \quad \forall n,
$$
where $C>0$ is a constant independent of $n$.

By Propositions \ref{35} and \ref{36}, we have
$$
\|\Psi_1 (u_n)\|_{p , \Omega , \alpha } \leq C.
$$
Hence, according to the definition of $\Psi_1$, we obtain  \eqref{8}.

Furthermore, as a consequence of \eqref{8}, we make the following assertions:

By \eqref{96}, compact embedding theorem, and Proposition \ref{34}, we can assume that
\begin{gather}
T_k (u_n )|_{\Omega _m}  \text { is a Cauchy sequence in measure}, \label{97}\\ 
T_k ( \tau ( u_n ) ) = \tau (T_k (u_n))  \text { is a Cauchy sequence in measure}, \label{169}
\end{gather}
where $\Omega _m = \Omega \cap \{|x|< m\}$.

Let $\varepsilon > 0$ be fixed. For every $\ell > 0$ and every $n_1$ and $n_2$, we have
\begin{gather}
\{ |u_{n_1} - u_{n_2} |> \varepsilon\} \subset \{ |u_{n_1} | >\ell \} \cup  \{ |u_{n_2} | >\ell \} \cup \{ |T_\ell (u_{n_1}) - T_\ell (u_{n_2}) |> \varepsilon\}. \label{98}
\end{gather}

Combining \eqref{98}, \eqref{8},  \eqref{97}, and \eqref{169}, we conclude that
\begin{gather*}
u_n |_{\Omega _m}  \text { and } \tau( u_n ) \text { are Cauchy sequences in measure}.
\end{gather*}

Therefore, there exist measurable functions  $u: \Omega \rightarrow \bar{\mathbb{R}}$ and $w: \partial\Omega \rightarrow \bar{\mathbb{R}}$ finites a.e. in $\Omega$. Additionally, according to  \eqref{96} and \eqref{99}, for any $k>0$ there exists a function $V_k \in (L^{p^{\prime}}(\Omega ; \vta))^N$ such that, up to a subsequence still indexed by $n$,
\begin{gather}
u_n \rightarrow u \quad  \text {  a.e. in } \Omega , \label{73}\\
T_k (u_{n}) \rightharpoonup T_k(u) \quad  \text {  in } W^{1, p}(\Omega ; \vta), \label{71}\\
\tau (T_k(u))= T_k (w), \\
A(\nabla T_k(u_n)) \rightharpoonup V _k \quad \text { in } (L^{p^{\prime}}(\Omega ; \vta ))^N, \label{72} 
\end{gather}
for all $k>0$. Furthermore, since $W^{1, p} (\Omega ; \vartheta_\alpha ) \hookrightarrow L^p (\partial \Omega ; \vartheta_\beta)$, we can assume that
\begin{equation}\label{95}
T_k (u_{n}) \rightarrow T_k(w), \quad \text {  in } L^{ p}(\partial \Omega ; \vtb).
\end{equation}\\


{\it Step 3. The following limits holds.}
\begin{equation} \label{80}
\lim _{k \rightarrow \infty} \limsup _{n \rightarrow \infty} \frac{1}{k} \int_{\Omega} A (\nabla u_n) \nabla T_k (u_n) \dvta =0.
\end{equation}\\

Using the test function $\frac{1}{k} T_k (u_n)$ in \eqref{6}, we have
\begin{equation*}
\begin{aligned}
&\frac{1}{k} \int_{\Omega} A(\nabla u_n)  \nabla T_k (u_n)  \dvta + \frac{1}{k}\int_{\Omega}  H_n (x,  \nabla u_n) T_k (u_n) \dvtao \\
& \qquad \quad \ \ + \frac{1}{k}\int_{\Omega} G_n(x, u_n) T_k (u_n) \dvtao   + \frac{1}{k}\int _{\partial \Omega} K(u_n) T_k (u_n) \dvtb \\
& \qquad = \frac{1}{k}\int_{\Omega} T_k (u_n) f_n \dvta  + \frac{1}{k} \int _{\partial \Omega} T_k (u_n) g_n  \dvtb,
\end{aligned}
\end{equation*}
which yields that
\begin{equation}\label{2}
\begin{aligned}
&\frac{1}{k} \int_{\Omega} A(\nabla u_n ) \nabla T_k (u_n) \dvta  +   \frac{1}{k}\int _{\partial \Omega} K(u_n) T_k (u_n) \dvtb\\
& \qquad \leq  \frac{1}{k}\int_{\Omega} | H_n (x,  \nabla u_n) T_k (u_n) |\dvtao  +  \frac{1}{k} \int_{\Omega} |f_n| |T_k(u_n)| \dvta \\
&\qquad \quad \ \ + \frac{1}{k} \int_{\partial \Omega} |g_n| |T_k(u_n)| \dvtb
\end{aligned}
\end{equation}

Due to \eqref{73}, the sequence $T_k (u_n)$ converges to $T_k(u)$  a.e. in $\Omega$. Since $f_n$ strongly converges to $f$ in $L^1(\Omega,\vta)$ it follows that
$$
\lim _{n \rightarrow \infty} \frac{1}{k} \int_{\Omega} |f_n||T_k ( u_n)| \dvta =\frac{1}{k} \int_{\Omega}|f| |T_k(u)|\dvta .
$$

Recalling that $u$ is finite  a.e. in $\Omega$, we deduce that
$$
\lim _{k \rightarrow \infty} \lim _{n \rightarrow \infty} \frac{1}{k} \int_{\Omega}|f_n||T_k (u_n)| \dvta=0.
$$
Similarly, we obtain
$$
\lim _{k \rightarrow \infty} \lim _{n \rightarrow \infty} \frac{1}{k} \int_{\partial \Omega}|g_n||T_k (u_n)| \dvtb=0.
$$

Now, we procedure to prove 
\begin{equation}\label{9}
\lim _{k \rightarrow \infty} \limsup _{n \rightarrow \infty}  \frac{1}{k}\int_{\Omega}  |H_n (x,  \nabla u_n) T_k (u_n) |\dvtao=0.
\end{equation}
By the generalized  H\"older inequality, \eqref{21} and \eqref{8}, we have
\begin{align*}
&\frac{1}{k}\int_{\Omega}  |H_n (x,  \nabla u_n) T_k (u_n) |\dvtao\\
& \qquad\leq   \|b\|_{(\gamma , \na , 1 , \{|u_n|>\sqrt{k}\})} \||\nabla u_n|^{p-1} \|_{(\gamma , \na ^\prime , \infty , \{|u_n|>\sqrt{k}\})} \\
& \qquad \quad \ \ + \frac{\sqrt{k}}{k} \|b\|_{(\gamma , \na , 1 , \{|u_n|\leq \sqrt{k}\})} \||\nabla u_n|^{p-1} \|_{(\gamma , \na ^\prime , \infty , \{|u_n|\leq \sqrt{k} \})}\\
 & \qquad  \leq C \int ^{\vtao( \{|u_n|>\sqrt{k}\} )} _0 t^{1/ \na}  b^\ast _\gamma (t) \frac{\dt}{t} + \frac{C}{\sqrt{k}}\|b\|_{(\gamma , \na , 1)} \\
 &  \qquad \leq C \int ^{C/\ln (1+ \sqrt{k}) } _0 t^{1/ \na}  b^\ast _\gamma(t) \frac{\dt}{t} + \frac{C}{\sqrt{k}}\|b\|_{(\gamma , \na , 1)}.
\end{align*}
This proves \eqref{9}. It follows that \eqref{80} holds.\\


{\it Step 4. We prove that for any $k>0$}
\begin{equation}
\lim _{n \rightarrow \infty} \int_{\Omega} [  A ( \nabla T_k (u_n) ) -   A ( \nabla T_k(u) ) ] [\nabla T_k (u_n) - \nabla T_k(u) ] \dvta =0. \label{81}
\end{equation}\\

Let $h_\ell$ be defined by
$$
h_\ell(s)= \left\{ 
\begin{aligned}
& 0 & & \text { if }|s|>2 \ell, \\ 
&\frac{2 \ell-|s|}{\ell} & & \text { if } \ell<|s| \leq 2 \ell, \\ 
&1 & & \text { if } |s| \leq \ell .
\end{aligned}\right.
$$

Using  $h_\ell (u_n) [ T_k (u_n )-T_k(u) ]$ in \eqref{6}, we have
\begin{equation}\label{82}
\begin{aligned}
&\int_{\Omega} h_\ell (u_n)A(\nabla u_n)  (\nabla T_k (u_n ) -  \nabla T_k(u)) \dvta   \\
& \qquad = a_{k, \ell, n}+b_{k, \ell, n}+c_{k, \ell, n}+d_{k, \ell, n}+e_{k, \ell, n} + f_{k, \ell, n},
\end{aligned}
\end{equation}
with
\begin{align*}
&a_{k, \ell, n}=\int_{\Omega} h_\ell (u_n) f_n [ T_k(u_n)-T_k(u) ] \dvta, \\
&b_{k, \ell, n}=\int_{\partial \Omega} h_\ell (u_n) g_n [ T_k(u_n)-T_k(u) ] \dvtb, \\
&c_{k, \ell, n}= -\int_{\Omega} h_\ell ^{\prime} (u_n) A(\nabla u_n) \nabla u_n  [ T_k(u_n)-T_k(u) ] \dvta,\\
&d_{k, \ell, n}= -\int_{\Omega} H_n (x, \nabla u_n ) h_\ell (u_n) [ T_k(u_n)-T_k(u) ] \dvtao, \\
&e_{k, \ell, n}= -\int_{\Omega}  G_n (x , u_n) h_\ell (u_n) [ T_k(u_n)- T_k(u) ] \dvtao ,\\
&f_{k,\ell,n} =  - \int_{\partial \Omega} h_\ell (u_n)  K ( u_n ) [ T_k (u_n) -  T_k(u) ] \dvtb.
\end{align*}

We now pass to the limit in \eqref{82} first as $n\rightarrow \infty$ and then as $\ell\rightarrow \infty$.  We prove:
\begin{equation}\label{165}
\lim _{\ell \rightarrow \infty} \limsup _{n \rightarrow \infty} \int_{\Omega} h_\ell (u_n) A(\nabla u_n) [ \nabla T_k (u_n)-\nabla T_k(u) ] \dvta  =0 .
\end{equation}

Due to the point-wise convergence of $u_n$ the sequence $T_k(u_n)-T_k(u)$ converges to zero   a.e. in $\Omega$. Since $f_n \rightarrow f$ in $L^1(\Omega ; \vta)$,   $g_n \rightarrow g$ in $L^1(\partial \Omega ; \vtb)$, and $T_k(u_n) \rightarrow T_k(u)$ in $L^p(\partial \Omega ; \vtb)$, we obtain that
\begin{equation}\label{84}
\lim _{n \rightarrow \infty} a_{k, \ell , n } = \lim _{n \rightarrow \infty} b_{k, \ell , n } = 0,
\end{equation}
also
\begin{equation}\label{166}
\lim _{n \rightarrow \infty} f_{k,\ell,n} =  \int_{\partial \Omega} h_\ell (u_n)  K ( T_{2\ell}(u_n) ) [ T_k (u_n) -  T_k(u) ] \dvtb = 0 .
\end{equation}

Since
$$
|c_{k, \ell , n}| \leq \frac{2 k}{\ell} \int_{\left\{|u_n| \leq 2 \ell\right\}} A  ( \nabla u_n ) \nabla u_n \dvta
$$
and due to \eqref{80}, we obtain
\begin{equation}\label{86}
\lim _{\ell \rightarrow \infty} \limsup _{n \rightarrow \infty} c_{k, \ell , n}=0 .
\end{equation}

Now, we prove that 
\begin{equation}\label{83}
 \lim _{n\rightarrow \infty} d_{k,\ell,n}=0.
\end{equation}
Let $q_1>1$ so that $1/\na + (p-1)/p +1/q_1 =1$. From \eqref{24}, \eqref{1}, and \eqref{70},
\begin{align*}
&|d_{k,\ell,n}|=\left|\int_{\Omega} H _n (x, \nabla u_n ) h_\ell (u_n) [ T_k(u_n)-T_k(u) ] \dvtao\right| \\
& \qquad =\left|\int_{\Omega} H_n (x, \nabla T_{2 \ell} (u_n) ) h_\ell (u_n) [ T_k(u_n)-T_k(u) ] \dvtao\right| \\
& \qquad \leq \left(\int_{\Omega} b^{\na} \dvtao \right)^{1/\na} \left(\int_{\Omega}|\nabla T_{2\ell} (u_n)|^p \dvtao \right)^{(p-1)/p} \\
& \qquad \quad \ \ \cdot \left[\int_{\Omega} ( T_k(u_n)-T_k(u) ) ^{q_1} \dvtao\right]^{1/q_1}\\
& \qquad \leq (2C\ell)^{(p-1)/p} \left(\int_{\Omega} b^{\na} \dvtao \right)^{1/\na}  \left[\int_{\Omega} ( T_k(u_n)-T_k(u))^{q_1} \dvtao\right]^{1/q_1}.
\end{align*}
Then, we obtain \eqref{83}.

On the other hand, we deduce from \eqref{22} and from the fact that $0 \leq r <(N+\gamma)(p-1) /(N + \alpha -p)$,  one has
\begin{equation}\label{87}
\| |u_n|^r \|_{(\gamma , z, \infty) } \leq C .
\end{equation}

Using \eqref{25}, \eqref{87}, and generalized Hölder inequality, we get
$$
\begin{aligned}
&\|G_n(x, u_n)\|_{1 ,E , \gamma}  =\int_{ E } |G_n (x, u_n)| \\
& \qquad \leq \int_{ E } |c| |u_n|^r\dvtao  \leq \|c\|_{(\gamma , z^{\prime}, 1 , E)} \| |u_n|^r \|_{(\gamma , z, \infty , E)} \\
& \qquad \leq C \int ^{\vtao (E)} _0 c^\ast _\gamma (t) t ^{1/z^{\prime}}\frac{\dt}{t} ,
\end{aligned}
$$
which is small when $\vtao (E)$ is small. Hence $G_n(x, u_n)$ is equi-integrable. Therefore, Vitali's Theorem implies that
\begin{equation}\label{150}
G_n(x, u_n) \rightarrow G(x, u) \quad \text { in } L^1 (\Omega , \vtao).
\end{equation}
Then we conclude 
\begin{equation}\label{88}
\lim _{\ell\rightarrow \infty} \lim _{n\rightarrow \infty} e_{k,\ell,n}=0.
\end{equation}

From \eqref{84}, \eqref{86}, \eqref{83}, and \eqref{88},  we obtain that for any $k>0$
$$
\lim _{\ell \rightarrow \infty} \limsup _{n \rightarrow \infty} \int_{\Omega} h_\ell (u_n) A(\nabla u_n) [ \nabla T_k (u_n)-\nabla T_k(u) ] \dvta  =0 ,
$$
with which we prove \eqref{165}.

Recalling that for any $\ell>k$, we have
\begin{equation*}
h_\ell (u_n) A( \nabla u_n) \nabla T_k (u_n)= A ( \nabla u_n ) \nabla T_k (u_n) \quad \text {  a.e. in } \Omega .
\end{equation*}

It follows that
\begin{equation}\label{90}
\begin{aligned}
& \limsup _{n \rightarrow \infty} \int_{\Omega} A (\nabla u_n ) \nabla T_k (u_n) \dvta \\
& \qquad  \leq \lim _{\ell \rightarrow \infty} \limsup _{n \rightarrow \infty} \int_{\Omega} h_\ell (u_n) A( \nabla u_n ) \nabla T_k(u) \dvta .
\end{aligned}
\end{equation}

According to the definition of $h_\ell$, we have
$$
h_\ell (u_n) A ( \nabla u_n )=h_\ell (u_n) A ( \nabla T_{2 \ell} (u_n) ) \quad \text {  a.e. in } \Omega,
$$
so that \eqref{73} and \eqref{72} give
\begin{equation}\label{91}
\lim _{n \rightarrow \infty} \int_{\Omega} h_\ell (u_n) A ( \nabla u_n ) \nabla T_k (u) \dvta=\int_{\Omega} h_\ell (u) V_{2 \ell} \nabla T_k(u) \dvta .
\end{equation}

If $\ell>k$, we have
$$
A ( \nabla T_\ell (u_n) ) \chi_{\{|u_n|<k\}}=  A(  \nabla T_k (u_n ) ) \chi_{\{|u_n|<k\}}
$$
a.e. in $\Omega$. From \eqref{73} and \eqref{72}, it follows that
$$
V_\ell \chi_{\{|u|<k\}}= V_k \chi_{\{|u|<k\}} \quad  \text {  a.e. in } \Omega \backslash\{|u|=k\},
$$
and then we obtain for any $\ell>k$
$$
V_\ell \nabla T_k(u)= V_k \nabla T_k(u) \quad  \text {  a.e. in } \Omega.
$$

Therefore, \eqref{90} and \eqref{91}  allow us to conclude
\begin{equation}\label{92}
\limsup _{n \rightarrow \infty} \int_{\Omega} A (\nabla T_k (u_n )) \nabla T_k  (u_n ) \dvta \leq \int_{\Omega} V_k \nabla T_k(u) \dvta .
\end{equation}

Now, we are in a position to prove \eqref{81}. Indeed, the monotone character of $A$ implies that for any $n$,
\begin{equation}\label{93}
0 \leq \int_{\Omega} [ A( \nabla T_k (u_n)) - A(  \nabla T_k (u )  ) ] [ \nabla T_k (u_n)-\nabla T_k(u)] \dvta.
\end{equation}

 Writing
$$
\begin{aligned}
& \int_{\Omega} [ A(\nabla T_k(u_n))-A (\nabla T_k(u))]   [\nabla T_k(u_n)-\nabla T_k(u)] \dvta \\
&\qquad =  \int_{\Omega} A( \nabla T_k (u_n)) [ \nabla T_k(u_n)-\nabla T_k(u) ]  \dvta \\
& \qquad \quad \ \ -\int_{\Omega} A( \nabla T_k(u)) [\nabla T_k(u_n)-\nabla T_k(u)] \dvta.
\end{aligned}
$$
Using \eqref{92} and \eqref{93} allows one to conclude that \eqref{81} holds for any $k>0$.\\


{\it Step 5. We prove that for any $k>0$}
\begin{gather}
A(\nabla T_k(u))=V_k, \label{13} \\
A(\nabla T_k (u_n)) \nabla T_k(u_n) \rightharpoonup A (\nabla T_k(u)) \nabla T_k(u) \quad \text { {\it in} } L^1(\Omega ; \vta). \label{14}
\end{gather}\\

From \eqref{81}, we have for any $k>0$
\begin{equation}\label{15}
\lim _{n \rightarrow \infty} \int_{\Omega} A( \nabla T_k (u_n)) \nabla T_k (u_n) \dvta = \int_{\Omega} V_k \nabla T_k(u) \dvta.
\end{equation}

The classical arguments, known as Minty arguments (see \cite{leray1965quelquesresulatat, lions1969quelquesmethodes}), allow us to identify  $V_k$ with $A( \nabla T_k(u))$. Let $\phi \in (L^{\infty}(\Omega))^N$. By \eqref{72} and \eqref{15}, it follows that for any $t \in \mathbb{R}$
\begin{align*}
& \lim _{n \rightarrow \infty} \int_{\Omega} [  A( \nabla T_k( u_n ) )- A ( \nabla T_k (u) +t \phi ) ]    [\nabla T_k( u_n) - \nabla T_k (u) - t \phi ] \dvta \\
& \qquad = - \int_{\Omega} [ V_k - A ( \nabla T_k (u ) +t  \phi ) ]  t \phi \dvta.
\end{align*}

Using the monotone character \ref{136} of $A$, we obtain that for any $t \neq 0$
$$
-\operatorname{sign}(t) \int_{\Omega}  [ V_k -A( \nabla T_k ( u ) +t \phi) ] \phi \dvta \geq 0 .
$$

Since $A( \nabla T_k (u)+t \phi)$ converges strongly to $A ( \nabla T_k (u ) )$ in $(L^{p^{\prime}}(\Omega ; \vta ) )^N$ as $t$ goes to zero, letting $t \rightarrow 0$ in the above inequality leads to
$$
\int_{\Omega} [ V_k - T_k (\nabla u) ] \phi\dvta =0.
$$
for any $\phi \in (L^{\infty}(\Omega))^N$. We  conclude that \eqref{13}.

From \eqref{81}, we get
$$
[ A( \nabla T_k (u_n ) )-A( \nabla T_k(u))]  [\nabla T_k (u_n)-\nabla T_k(u)] \rightarrow 0, \quad \text { in } L^1(\Omega , \vta).
$$
Using \eqref{71}   leads to \eqref{14}.\\


{\it Step 6.  We claim that}
\begin{equation}\label{102}
 \nabla u_n  |_{\Omega _m } \text { {\it is a Cauchy sequence in measure}}.
\end{equation}\\

Here, we follow an argument used in the proof of \cite[Lemma 1]{boccardo1992nonlinearhandsidemeasure}. To prove \eqref{102}, given $\varepsilon>0$ and $\varepsilon _1>0$ fixed, we set, for some $L>1$ and $\delta \in (0,1)$:
\begin{align*}
&E_1= \{ |\nabla T_k( u_{n_1})|>L\} \cup   \{ |\nabla T_k( u_{n_2} )|> L \} \cup \{ | u_{n_1}  |> L\} \cup \{| u_{n_2} |>L\}, \\
&E_2= \{| T_k (u_{n_1})  - T_k( u_{n_2} ) |>\delta ^2 \} ,\\
&E_3=\{|T_k( u_{n_1} ) - T_k( u_{n_2}) | \leq \delta ^2, \ |\nabla T_k( u_{n_1} )| \leq L, \ |\nabla T_k( u_{n_2} ) | \leq L, \\
& \qquad | u_{n_1}  | \leq L ,  \ | u_{n_2} | \leq L, \ |\nabla  T_k(u_{n_1}) - \nabla T_k( u_{n_2}) | > \varepsilon\} .
\end{align*}
Observe that 
$$
\{|\nabla T_k( u_{n_1} )  -  \nabla  T_k( u_{n_2})|> \varepsilon\}  \subset E_1 \cup E_2 \cup E_3.
$$

From \eqref{96}, $(T_k (u_n))$ and $(|\nabla T_k (u_n)|)$ are bounded in $L^1(\Omega _m ; \vta)$; we have  
$$
\vta (E_1 \cap \Omega _m) < \varepsilon _1,
$$
for $L>k$ large enough, independently of $n_1$ and  $n_2$. 

Now, let's consider $\vta (E_3 \cap \Omega _m)$. Define
$$
K_0=\{( \xi, \eta) \in \mathbb{R}^{2N} \:|\: |\xi| \leq L , \ |\eta| \leq L, \ |\xi-\eta| \geq \varepsilon\},
$$
then
$$
\inf \{[ A (\xi)- A(\eta)] [\xi-\eta] \:|\:(\xi, \eta) \in K_0\}=C>0,
$$
since $K_0$ is compact.

Hence,
\begin{align*}
&C \vta (E_3 \cap \Omega _m)  \\
& \qquad \leq \int_{E_3 \cap \Omega _m} [ A ( \nabla T_k ( u_{n_1}) )-A ( \nabla T_k ( u_{n_2}) ) ] [ \nabla T_k (u_{n_1} ) - \nabla T_k ( u_{n_2}) ] \dvta \\
& \qquad \leq \int_{\Omega }  h_{L}(u_{n_1})h_{L}(u_{n_2}) h_{\delta} ( T_k(u_{n_1}) - T_k(u_{n_2}))   \\
& \qquad \quad \ \ \cdot [ A ( \nabla T_k ( u_{n_1}) )-A ( \nabla T_k ( u_{n_2}) ) ] [ \nabla T_k (u_{n_1} ) - \nabla T_k ( u_{n_2}) ] \dvta.
\end{align*}
Substituting  $h_{L}(u_{n_1})h_{L}(u_{n_2}) h_{\delta} (T_k(u_{n_1}) -  T_k(u_{n_2}))   [T_k(u_{n_1})-T_k(u_{n_2})]$  into \eqref{6} and considering also
\begin{align*}
&\int _{\Omega} \{ h_{L}^\prime (u_{n_1})h_{L}(u_{n_2}) h_{\delta} (T_k(u_{n_1} )- T_k(u_{n_2})) A(\nabla u_{n_1}) \nabla u_{n_1}  \\
&\qquad \quad  \ \ +  h_{L}(u_{n_1}) h_{L} ^{\prime}(u_{n_2})  h_{\delta} ( T _k (u_{n_1}) - T_k (u_{n_2})  ) A(\nabla u_{n_1})\nabla u_{n_2} \\
&\qquad \quad \ \ + h_{L}(u_{n_1}) h_{L}(u_{n_2}) h_{\delta} ^\prime (T_k(u_{n_1} )- T_k(u_{n_2}))\\
&\qquad \quad \ \ \cdot A(\nabla u_{n_1})[\nabla T_k(u_{n_1}) - \nabla T_k(u_{n_2}) ] \} (T_k(u_{n_1})-T_k(u_{n_2}))\dvta\\
& \qquad \leq C L\delta,
\end{align*}
because \ref{136} and \eqref{7}. Proceeding as in Step 4, for a suitable constant $\delta \in (0,1)$, we have that there is $N_0 >0$ such that
$$
 \vta (E_3 \cap \Omega _m) < \varepsilon _1, \quad \text { if } n_1, n_2 > N_0.
$$

By \eqref{97}, there exists $N_1>0$ such that
$$
\vta (E_2 \cap \Omega _m) < \varepsilon _1, \quad \text { if } n_1, n_2 > N_1. 
$$

Hence,
\begin{equation}\label{103}
 \nabla T _k (u_n)  |_{\Omega _m } \text { {\it is a Cauchy sequence in measure}}.
\end{equation}

Finally, if $\varepsilon >0$ is fixed, we have
\begin{align*}
&\{|\nabla u_{n_1} - \nabla u_{n_2}| > \varepsilon\} \\
& \qquad \subset \{|u_{n_1}| >L_0 \} \cup \{|u_{n_2}| > L_0 \} \cup \{|\nabla T_{L_0} ( u_{n_1} ) - \nabla T_{L_0} ( u_{n_2})| >\varepsilon\}.
\end{align*}
Therefore, \eqref{8} and \eqref{103} imply \eqref{102}.

On the other hand, from a computation similar to \eqref{50},  using \eqref{102}, we see
\begin{equation}\label{151}
H_n (x , \nabla u_n) \rightarrow H (x , \nabla u) \quad \text { in } L^1 (\Omega ; \vta).
\end{equation}\\


{\it Step 7. We now proceed to take the limit in the approximated problem.}\\

Let $h \in W^{1, \infty}(\mathbb{R})$ with compact support  contained in the interval $[-k, k]$, where $k>0$, and let $\varphi \in W^{1, p}(\Omega ; \vta) \cap L^{\infty}(\Omega)$. Using $ h (u_n) \varphi$ as a test function in the approximated problem, we have
\begin{equation}\label{101}
\begin{aligned}
&\int_{\Omega} h (u_n) A(\nabla u_n)  \nabla \varphi \dvta + \int_{\Omega}  h ^\prime (u_n) \varphi A(\nabla u_n)  \nabla u_n \dvta \\
& \qquad  + \int_{\Omega}  H_n (x,  \nabla u_n) h (u_n) \varphi \dvtao + \int_{\Omega} G_n(x, u_n) h (u_n) \varphi \dvtao \\
& \qquad + \int _{\partial \Omega} K(u_n) h (u_n) \varphi \dvtb 
=\int_{\Omega} h (u_n) \varphi f_n \dvta  + \int _{\partial \Omega} h (u_n) \varphi g_n  \dvtb.
\end{aligned}
\end{equation}

We want to pass to the limit in this equality. Since $\operatorname{supp} (h)$ is contained in the interval $[-k, k]$, and considering that  $f_n \rightarrow f$ in $L^1(\Omega ; \vta )$,  $g_n \rightarrow g$ in $L^1 (\partial \Omega ; \vtb)$, $T_k (u_n) \rightarrow T_k(u)$ in $L^p (\partial \Omega ; \vtb)$, and by \eqref{73}, we  obtain
\begin{gather*}
\lim _{n \rightarrow \infty} \int_{\Omega} f_n  h(u_n)\varphi \dvta =\int_{\Omega} f  h(u) \varphi \dvta ,\\
\lim _{n \rightarrow \infty} \int_{\partial \Omega} g_n  h(u_n)\varphi \dvtb =\int_{\partial \Omega} g  h(u) \varphi \dvtb,\\
\lim _{n \rightarrow \infty} \int _{\partial \Omega} K(u_n) h (u_n) \varphi \dvtb  =\int _{\partial \Omega} K(u) h (u) \varphi \dvtb.
\end{gather*}

In view of \eqref{72} and \eqref{13},
$$
\lim _{n \rightarrow \infty} \int_{\Omega} h(u_n) A( \nabla T_k (u_n )) \nabla \varphi \dvta  =\int_{\Omega} h(u) A( \nabla T_k(u)) \nabla \varphi \dvta.
$$

From \eqref{14}, we get
$$
\lim _{n \rightarrow \infty} \int_{\Omega} h^{\prime} (u_n) \varphi A( \nabla T_k (u_n )) \nabla T_k (u_n)  \dvta =\int_{\Omega} h^{\prime}(u) \varphi A( \nabla T_k(u)) \nabla T_k(u)  \dvta .
$$

By \eqref{150} and \eqref{151}, up to a subsequence still indexed by $n$,
\begin{gather*}
\lim _{n \rightarrow \infty}  \int_{\Omega}  G_n (x, u_n) h (u_n) \varphi \dvtao = \int_{\Omega}  G (x,  u) h (u) \varphi \dvtao.\\
\lim _{n \rightarrow \infty}  \int_{\Omega}  H_n (x,  \nabla u_n) h (u_n) \varphi \dvtao = \int_{\Omega}  H_n (x,  \nabla u) h (u) \varphi \dvtao.
\end{gather*}

Therefore, by passing to the limit in \eqref{101}, we obtain condition \eqref{104} in the definition of a renormalized solution. The decay of the truncated energy \eqref{105} is a consequence of \eqref{80} and \eqref{14}.

Additionally, from \eqref{70}, $(\nabla u_n)$ is equi-integrable, and by $\nabla T_k(u_n) \rightarrow \nabla T_k (u)$ a.e. in $\Omega$, we have
$$
\nabla T_k(u_n) \rightarrow \nabla T_k (u) \quad \text { in } L^1(\Omega ; \vta).
$$ 

Moreover, by Fatou's lemma, we deduce from \eqref{70} that
$$
\int _{\Omega} |\nabla T_k (u)|^p \dvta + \int _{\Omega} |\nabla T_k (u)|^p \dvta \leq Ck.
$$
Then, Lemma \ref{42} implies $|u|^{p-1} \in L^{\past / p, \infty}(\Omega; \vtao)$, $|u|^{p-1} \in L^{q/p , \infty} (\partial  \Omega ; \vtb)$, and $|\nabla u|^{p-1} \in L^{\na^{\prime}, \infty}(\Omega; \vtao)$. Recall that, from Step 2, $u$ is finite a.e. in $\Omega$, and $T_k(u) \in W^{1, p}(\Omega;\vta)$ for any $k>0$. We can conclude that $u$ is a renormalized solution to \eqref{5}. \end{proof}

\subsection{Stability result}\label{167}


For any $n\in \mathbb{N}$, let $u_n$ be a renormalized solution  to the problem
\begin{equation}  \label{142}
\left\{ 
\begin{aligned}
-\operatorname{div}(\vta A( \nabla u) )+ \vtao H_n(x,\nabla u) + \vtao G_n(x,u)&= f _n \vta & & \text { in } \Omega, \\ 
 \vta A(\nabla u) \cdot  \nu + \vtb K(u) &= g_n \vtb & &  \text { on } \partial \Omega,
\end{aligned}
\right.
\end{equation}
where $f_{n}  \in L^1(\Omega , \vta )$, $g_{n} \in L^1(\partial \Omega ; \vtb)$, and
\begin{equation}\label{143}
 f_{n} \rightarrow f \text {  in } L^1(\Omega , \vta ) \quad \text { and } \quad  g_{n} \rightarrow g \text {  in } L^1(\partial \Omega ; \vtb).
\end{equation}

Also, $H_n : \Omega \times \mathbb{R}^N \rightarrow \mathbb{R}$, $G_n : \Omega \times \mathbb{R} \rightarrow \mathbb{R}$  are Carathéodory functions satisfying:
\begin{gather}
| H_n (x, \xi) |\leq b(x)|\xi|^{p-1}, \quad b\in L^{\na , 1}(\Omega ; \vtao),\\
G_n(x, s) s \geq 0, \quad |G_n(x,s)|\leq c(x) |s|^{r}, \quad c \in L^{z^{\prime}, 1}(\Omega ; \vtao), 
\end{gather}
for almost every $x \in \Omega$ and for every $s \in \mathbb{R}$ and $\xi \in \mathbb{R}^N$, where $\na$, $r$, $z$ verify  \eqref{78}. Furthermore, for almost every $x\in \Omega$
\begin{equation} \label{153}
H_n(x , \xi _n)\rightarrow H(x,\xi ) \quad \text { and } \quad G_n(x , s _n)\rightarrow G(x,s ), 
\end{equation}
for every sequences $(\xi _n) \subset \mathbb{R}^N$ and $( s_n) \subset \mathbb{R}$, where $H$ and $G$ are Carathéodory functions.

\renewcommand*{\proofname}{{\bf Proof of Theorem \ref{155}}}
\begin{proof}
We mainly follow the arguments developed in the proof of Theorem \ref{146}. 

Using $h=h_\ell$, defined by
$$
h_\ell(s)= \left\{ 
\begin{aligned}
& 0 & & \text { if }|s|>2 \ell, \\ 
&\frac{2 \ell-|s|}{\ell} & & \text { if } \ell<|s| \leq 2 \ell, \\ 
&1 & & \text { if } |s| \leq \ell ,
\end{aligned}\right.
$$
and $\varphi=T_k (u_n)$ in the renormalized formulation \eqref{104} we have, for any $\ell>0$ and any $k>0$
\begin{equation}\label{144}
\begin{aligned}
&\int_{\Omega} h_\ell (u_n) A(\nabla u_n)  \nabla T_k (u_n) \dvta + \int_{\Omega} h^{\prime} _\ell (u_n) A(\nabla u _n) \nabla u_n T_k (u_n) \dvta  \\
&\qquad \quad \ \ + \int_{\Omega} H _n (x,\nabla u _n) T_k (u_n) h_\ell (u_n) \dvtao  + \int_{\Omega} G _n(x, u_n) T_k (u_n) h_\ell (u_n) \dvtao \\
& \qquad \quad \ \ + \int_{\partial \Omega} K(u_n) T_k (u_n) h_\ell (u_n) \dvtb \\
& \qquad = \int_{\Omega}  T_k (u_n) h_\ell(u_n) f_n \dvta + \int_{\partial \Omega}  T_k (u_n) h_\ell(u_n) g_n \dvtb.
\end{aligned}
\end{equation}

We now pass to the limit as $\ell \rightarrow \infty$: 

In view of the definition of $h_\ell$ for any $\ell >k$ we have
$$
\int_{\Omega} h_\ell (u_n) A(\nabla u_n)  \nabla T_k (u_n) \dvta =\int_{\Omega} A(\nabla u_n) \nabla T_k (u_n) \dvta.
$$

Due to \eqref{105}, we get
$$
\lim _{\ell \rightarrow \infty} \int_{\Omega} h_\ell ^{\prime} (u_n) A( \nabla u_n) \nabla u_n T_k (u_n ) \dvta=0 .
$$

It follows that passing to the limit as $\ell \rightarrow \infty$  in  \eqref{144} leads to
\begin{equation}\label{145}
\begin{aligned}
&\int_{\Omega}  A(\nabla u_n)  \nabla T_k (u_n) \dvta   + \int_{\Omega} H_n (x,\nabla u _n) T_k (u_n)  \dvtao  + \int_{\Omega} G_n (x, u_n) T_k (u_n)  \dvtao \\
&\qquad + \int_{\partial \Omega} K(u_n) T_k (u_n)  \dvtb = \int_{\Omega} T_k (u_n)  f_n \dvta + \int_{\partial \Omega}  T_k (u_n) g_n \dvtb.
\end{aligned}
\end{equation}
Hence, proceeding as in step 1 of the proof Theorem \ref{146}, we have
\begin{gather*}
T_k(u_n ) \text { is bounded in } W^{1, p}(\Omega ; \vta), \\
A( \nabla T_k (u_n ) ) \text { is  bounded in }  (L^{p^{\prime}}(\Omega ; \vta) )^N \text { for any } k>0.
\end{gather*}

Using a similar process to one used to obtain \eqref{145} we get
\begin{equation*}
\begin{aligned}
&\int_{\Omega} A(\nabla u_n)  \nabla \Psi_p (u_n)  \dvta +\int_{\Omega}  H_n (x,  \nabla u_n) \Psi_p (u_n) \dvtao \\
&\qquad \quad \ \ + \int_{\Omega} G_n(x, u_n) \Psi_p (u_n) \dvtao  + \int _{\partial \Omega} K(u_n) \Psi_p (u_n) \dvtb\\
&\qquad =\int_{\Omega} \Psi_p (u_n) f_n \dvta  + \int _{\partial \Omega} \Psi_p (u_n) g_n  \dvtb, 
\end{aligned}
\end{equation*}
where 
$$
\Psi_p(r)=\int_0^r \frac{1}{(1+|t|)^p} \dt, \quad \forall r \in \mathbb{R}.
$$

By the argument in step 2 of the proof of Theorem \ref{146} we obtain
\begin{equation*}
\begin{gathered}
 \vta (\{x \in \Omega \:|\: |u_n(x)| > L \}) \leq \frac{C}{\ln (1+L)} ,\\
\vtb (\{x \in \partial \Omega \:|\: |u_n(x)| > L \}) \leq \frac{C}{\ln (1+L)},
 \end{gathered}
\end{equation*}
for all $n$, where $C>0$ is a constant independent of $n$.

Furthermore, there exist a measurable functions $u: \Omega \rightarrow \bar{\mathbb{R}}$, $w: \partial \Omega \rightarrow \bar{\mathbb{R}}$, and a field $V_k \in (L^{p^{\prime}}(\Omega ; \vta))^N$ such that, up to a subsequence still indexed by $n$,
\begin{gather*}
T_k (u_n )|_{\Omega _m}, \ u_n |_{\Omega _m}  \text { and } \textnormal{tr} (u_n )  \text { are Cauchy sequences in measure},\\
u_n \rightarrow u  \text {  a.e. in } \Omega ,  \text { and }  u_n \rightarrow w  \text {  a.e. on } \partial \Omega, \\
T_k (u_{n}) \rightharpoonup T_k(u) \quad  \text {  in } W^{1, p}(\Omega ; \vta), \\
\tau (T_k (u)) = T_k (w),\\
A(\nabla T_k(u_n)) \rightharpoonup V _k \quad \text { in } (L^{p^{\prime}}(\Omega ; \vta ))^N, 
\end{gather*}
for all $k>0$. Also, since $W^{1, p} (\Omega ; \vartheta_\alpha ) \hookrightarrow L^p (\partial \Omega ; \vartheta_\beta)$, we can assume that
\begin{equation*}
T_k (u_{n}) \rightarrow T_k(u) \quad \text {  in } L^{ p}(\partial \Omega ; \vtb).
\end{equation*}

Following the arguments developed in steps 3-5 of the proof of Theorem \ref{146}, we can conclude \eqref{148}. Furthermore, by repeating the same arguments, we can demonstrate that $u$ is a renormalized solution to \eqref{5}.\end{proof}

\appendix

\section{ }\label{139}

In this appendix, we prove the existence of weak solutions to the problem \eqref{7}. We study the existence of solutions for the problem
\begin{equation}\label{141}
\left\{\begin{aligned}
-\operatorname{div}\left( \vta A(\nabla u)\right)+\vtao H_n(x,  \nabla u)+ \vta G_n (x, u) &= f _n & & \text { in } \Omega_n, \\
 \vta A(\nabla u)\cdot \nu + \vtb K(u)&= g_n   & & \text { on } \partial \Omega _n \cap \partial \Omega,
\end{aligned}\right.
\end{equation}
where $\Omega _n = \Omega \cap \{|x|<n\}$ and $n\in \mathbb{N}$. Recall that $f_n = 0$ on $\Omega \backslash \Omega _n$, and $g_n = 0$ on  $\partial \Omega \backslash \partial \Omega _n$.

Define
$$
W|_{\mathcal{D}} = \{ u \in W^{1,p} (\Omega _n; \vta) \:|\: u|_{\partial \Omega _n \backslash \partial \Omega} = 0\}.
$$

Let $w\in W|_{\mathcal{D}} $. According to the Minty-Browder theorem, there exists a unique  $u\in W|_{\mathcal{D}} $ such that  
\begin{equation}\label{106}
\begin{aligned}
&\int_{\Omega _n} A(\nabla u)  \nabla v \dvta +\int_{\Omega _n}  H_n (x,  \nabla w) v \dvtao + \int_{\Omega _n} G_n(x, w) v \dvtao \\
&\qquad + \int _{\partial \Omega _n \cap \partial \Omega} K(w) v \dvtb 
 =\int_{\Omega _n } v f_n \dvta  + \int _{\partial \Omega _n \cap \partial \Omega} v g_n  \dvtb,
\end{aligned}
\end{equation}
for all $v\in W|_{\mathcal{D}}$.  

It follows that we can consider the functional $T :  W|_{\mathcal{D}} \rightarrow  W|_{\mathcal{D}} $ defined by
$$
T (w) = u, \quad \forall w \in  W|_{\mathcal{D}},
$$
where $u$ is the unique element of $ W|_{\mathcal{D}}$ verifying \eqref{106}. We now prove that $T$ is a continuous and compact operator.\\

{\it Step 1. $T$ is continuous.}\\

 Let $w_m \in W|_{\mathcal{D}}$ such that $w_m \rightarrow w$ in $W|_{\mathcal{D}}$. Up to a subsequence (still denoted by $w_n$), we have
\begin{gather}
w_m \rightarrow w \quad \text {  in } L^p(\partial \Omega _n  \cap \partial \Omega ; \vtb ),  \label{117}\\
w_m \rightarrow w \quad \text { a.e. in } \Omega _n, \label{118}\\
w_m \rightarrow w \quad \text { a.e. in } \partial \Omega _n  \cap \partial \Omega , \label{119}\\
 \nabla w_m \rightarrow  \nabla w \quad \text {  a.e. in } (\Omega _n )^N. \label{120}
\end{gather}

Write $u_m= T ( w_m )$. With $w_m$ in place of $w$ and choosing $u_m$ as a test function in \eqref{106}, from \ref{136}, \eqref{24}, and $\eqref{25}$, we obtain that
$$
\sigma \int_{\Omega _n} |\nabla u_m |^p \dvta \leq \int_{\Omega _n} n|u_m| \dvtao  + \int_{\Omega _n} | u_m f_n | \dvta + \int_{\partial \Omega _n \cap \partial \Omega} | u_m g_n| \dvtb.
$$

From the Poincaré inequality and Young's inequality, we get
\begin{equation}\label{140}
\int_{\Omega _n} |\nabla u_m |^p \dvta +  \int_{\Omega _n}| u_m |^p \dvta \leq C
\end{equation}
where $C>0$ is a constant independent of $m$. 

Consequently, there exists a subsequence (still denoted by $u_m$), a measurable function $u$, and a field $V$  such that
\begin{gather}
u_m \rightharpoonup u \quad \text {  in } W^{1, p}(\Omega _n ; \vta), \label{107}\\
u_m \rightarrow u \quad \text {  in } L^p(\Omega _n ; \vta ), \label{111}\\
u_m \rightarrow u \quad \text {  in } L^p(\partial \Omega _n  \cap \partial \Omega ; \vtb ), \label{108}\\
u_m \rightarrow u \quad \text { a.e. in } \Omega _n, \label{109}\\
u_m \rightarrow u \quad \text { a.e. in } \partial \Omega _n  \cap \partial \Omega , \label{112}\\
A( \nabla u_m ) \rightharpoonup  V \quad \text { in } (L^{p^{\prime}}(\Omega _n; \vta) )^N . \label{110}
\end{gather}

To prove the continuity of $T$, it remains to show that $u = T(w)$, i.e., $u$ satisfies \eqref{106}. Using \eqref{106} with $w_m$ in place of $w$ and the test function $u_m - u$, we have
\begin{equation}\label{113}
\begin{aligned}
&\int_{\Omega _n} A(\nabla u_m) [ \nabla u_m - \nabla u] \dvta +\int_{\Omega _n}  H_n (x,  \nabla w_m) (u_m-u) \dvtao \\
& \qquad \quad \ \ + \int_{\Omega _n} G_n (x, w _m ) (u_m-u) \dvtao  + \int _{\partial \Omega _n \cap \partial \Omega} K(w_m) (u_m-u) \dvtb \\
&\qquad =\int_{\Omega _n } (u_m-u) f_n \dvta  + \int _{\partial \Omega _n \cap \partial \Omega} (u_m-u) g_n  \dvtb.
\end{aligned}
\end{equation}

Then, from \eqref{24}, \eqref{25}, \eqref{119}, and \eqref{111} - \eqref{112},
\begin{equation}\label{114}
\lim _{m \rightarrow \infty} \int_{\Omega _n} A(\nabla u_m )[ \nabla u_m - \nabla u] \dvta=0 .
\end{equation}

Now, we use the Minty arguments to identify $V$ with $A(\nabla u)$. Let $\phi \in (L^{\infty}(\Omega _n))^N$. By \eqref{110} and \eqref{114}, it follows that for any $t \in \mathbb{R}$
\begin{align*}
& \lim _{m \rightarrow \infty} \int _{\Omega _n} [ A( \nabla u_m )- A(\nabla u  + t \phi )][\nabla u_m - \nabla u -t \phi ] \dvta \\
& \qquad  = - \int_{\Omega _n} [V - A( \nabla u+t \phi)] t \phi \dvta.
\end{align*}

Using the monotone character $A$, we obtain that for any $t \neq 0$,
$$
-\operatorname{sign}(t) \int_{\Omega _n} [V - A( \nabla u+t \phi)] \phi \dvta \geq 0 .
$$

Since $A(\nabla u+t \phi)$ converges strongly to $A( \nabla u)$ in $(L^{p^{\prime}}(\Omega _n ; \vta))^N$ as $t$ goes to zero, letting $t \rightarrow 0$ in the above inequality leads to
$$
\int_{\Omega _n}[V - A( \nabla u)] \phi \dvta=0,
$$
for any $\phi \in (L^{\infty}(\Omega _n))^N$. We  conclude that
\begin{equation}\label{115}
V = A( \nabla u).
\end{equation}

By using \eqref{118} - \eqref{120}, \eqref{110}, and \eqref{115}, we can pass to the limit as $n \rightarrow \infty$ in \eqref{106} with $w_n$ in place of $w$, and we get
\begin{equation*}
\begin{aligned}
&\int_{\Omega _n} A(\nabla u)  \nabla v \dvta +\int_{\Omega _n}  H_n (x,  \nabla w) v \dvtao + \int_{\Omega _n} G_n(x, w) v \dvtao \\
& \qquad + \int _{\partial \Omega _n \cap \partial \Omega} K(w) v \dvtb  =\int_{\Omega _n } v f_n \dvta  + \int _{\partial \Omega _n \cap \partial \Omega} v g_n  \dvtb,
\end{aligned}
\end{equation*}
for all $v\in W|_{\mathcal{D}}$. It follows that $T$ is continuous.\\

{\it Step 2. $T$ is  compact.} \\

Let $(w_m)\subset W|_{\mathcal{D}}$ be a bounded sequence. Up to a subsequence (still denoted by $w_n$), we have
\begin{gather}
w_m \rightarrow w \quad \text {  in } L^p( \Omega _n   ; \vta ), \label{121}\\
w_m \rightarrow w \quad \text {  in } L^p(\partial \Omega _n  \cap \partial \Omega ; \vtb ),  \label{122}\\
w_m \rightarrow w \quad \text { a.e. in } \Omega _n, \label{123}\\
w_m \rightarrow w \quad \text { a.e. in } \partial \Omega _n  \cap \partial \Omega , \label{124}\\
 \nabla w_m  \rightharpoonup  \nabla w \quad \text {  a.e. in } (\Omega _n )^N. \label{125}
\end{gather}

Write $u_m = T(w_m)$. We have
\begin{equation}\label{128}
\begin{aligned}
&\int_{\Omega _n} A(\nabla u_m)  \nabla v \dvta +\int_{\Omega _n}  H_n (x,  \nabla w_m) v \dvtao + \int_{\Omega _n} G_n(x, w_m) v \dvtao \\
& \qquad+ \int _{\partial \Omega _n \cap \partial \Omega} K(w_m) v \dvtb  =\int_{\Omega _n } v f_n \dvta  + \int _{\partial \Omega _n \cap \partial \Omega} v g_n  \dvtb,
\end{aligned}
\end{equation}
for all $v\in W|_{\mathcal{D}}$. 

Substituting $v=u_m$ into \eqref{128}, similarly as in the firs step, we obtain
\begin{equation}\label{129}
\int_{\Omega _n} |\nabla u_m |^p \dvta +  \int_{\Omega _n}| u_m |^p \dvta \leq C,
\end{equation}
where $C>0$ is a constant independent of $m$. 

Hence, there exists a subsequence (still denoted by $u_m$), a measurable function $u$, and a field $V$  such that
\begin{gather}
u_m \rightharpoonup u \quad \text {  in } W^{1, p}(\Omega _n ; \vta), \label{130}\\
u_m \rightarrow u \quad \text {  in } L^p(\Omega _n ; \vta ), \label{131}\\
u_m \rightarrow u \quad \text {  in } L^p(\partial \Omega _n  \cap \partial \Omega ; \vtb ), \label{132}\\
u_m \rightarrow u \quad \text { a.e. in } \Omega _n, \label{133}\\
u_m \rightarrow u \quad \text { a.e. in } \partial \Omega _n  \cap \partial \Omega . \label{134}
\end{gather}

Next, we prove 
\begin{equation}\label{126}
 \nabla u_m  \rightarrow \nabla u \quad \text {  in } (L^p( \Omega _n   ; \vta ))^N.
\end{equation}

To establish \eqref{126}, first, we prove 
\begin{equation} \label{127}
\nabla u_m   \text { {\it is a Cauchy sequence in measure}}.
\end{equation}
 Let $\varepsilon >0$ and $\varepsilon _1 >0$ be fixed. We set, for some  $L>0$ and $\delta>0$:
\begin{align*}
&E_1= \{|\nabla u_{m_1} |>L\}  \cup  \{|\nabla u_{m_2}|> L\}, \\
&E_2=\{ \ |\nabla u_{m_1}| \leq L, \ |\nabla u_{m_2}| \leq L, \ |\nabla u_{m_1} - \nabla u_{m_2}| > \varepsilon\}.
\end{align*}

Observe that 
$$
\{|\nabla u_{m_1}   -  \nabla u_{m_2}|> \varepsilon\}  \subset E_1 \cup E_2 .
$$

Since  $(u_m)$ and $(|\nabla u_m |)$ are bounded in $L^p(\Omega _n ; \vta)$, one has  
$$
\vta (E_1) < \varepsilon _1,
$$
for $L$ large enough, independently of $m_1$ and  $m_2$. 

Now, consider  $\vta (E_2)$. Define
$$
K_0=\{( \xi, \eta) \in \mathbb{R}^{2N} \:|\: |\xi| \leq L , \ |\eta| \leq L, \ |\xi-\eta| \geq\varepsilon\},
$$
then
$$
\inf \{[ A (\xi)- A(\eta)] [\xi-\eta] \:|\:(\xi, \eta) \in K_0\}=C>0,
$$
since $K_0$ is compact.

Hence,
\begin{equation} \label{135}
C \vta (E_2)  \leq \int_{E_2} [ A ( \nabla u_{m_1} ) - A ( \nabla  u_{m_2} ) ] [ \nabla u_{m_1}  - \nabla  u_{m_2} ] \dvta .
\end{equation}
Substituting  $u_{m_1}- u_{m_2}$ into \eqref{128}, for a suitable constant, we have there is $N_0 >0$ such that
$$
\vta (E_2) < \varepsilon _1, \quad \text { if } m_1, m_2 >N_0.
$$

Therefore, we conclude \eqref{127}.

Finally, substituting $v=u_m$ into \eqref{128}, we obtain that $(|\nabla u_m|^p)$ is uniformly integrable. Then, by Vitali's Convergence Theorem,  which proves \eqref{126}. It follows that $T$ is compact.

To finalize this appendix, proceeding similarly as in \eqref{140}, we have
$$
\int_{\Omega _n }|T(w)|^p \dvta \leq C, \quad \forall w\in W|_{\mathcal{D}},
$$
where $C$ is a constant depending on $\Omega_n$, $n$, $\alpha$, $f$, and $g$. Then, using Schauder's fixed-point theorem ensures the existence of at least one fixed point $T(w)=w$, proving the existence of a weak solution to the problem \eqref{141}.


\vspace{1cm}

\noindent {\bf Author contributions:} All authors have contributed equally to this work for writing, review and editing. All authors have read and agreed to the published version of the manuscript.

\noindent {\bf Funding:} This work were supported by National Institute of Science and Technology of Mathematics INCT-Mat, CNPq, Grants 170245/2023-3, 160951/2022-4,  308395/2023-9, and by Paraíba State Research Foundation (FAPESQ), Grant 3034/2021.

\noindent {\bf Data Availibility:} No data was used for the research described in the article.

\noindent {\bf Declarations}

\noindent {\bf Conflict of interest:} The authors declare no conflict of interest.

\noindent {\bf Declaration of generative AI and AI-assisted technologies in the writing process:} During the preparation of this work, the authors used the AI ChatGPT and the AI Bard to correct grammar and orthography in the text. After using this tool/service, the authors reviewed and edited the content as needed and takes full responsibility for the content of the publication.

\end{document}